\documentclass[a4paper,12pt,oneside]{amsart}

\usepackage{amsmath,amsfonts,amssymb,amsthm,amsopn,amsxtra}

\setlength{\topmargin}{5mm}
\setlength{\headheight}{8pt}
\setlength{\textheight}{230mm}  

\setlength{\oddsidemargin}{1cm}
\setlength{\textwidth}{148 mm}   

\newtheorem{Thm}{Theorem}[section]    
\newtheorem{Lem}[Thm]{Lemma}

\newtheorem{Cor}[Thm]{Corollary}

\theoremstyle{definition}
\newtheorem{varrem}[Thm]{}

\theoremstyle{remark}
\newtheorem{Rem}[Thm]{Remark}
\newtheorem{Rems}[Thm]{Remarks}
\numberwithin{equation}{section}

\DeclareMathOperator{\im}{Im}
\DeclareMathOperator{\re}{Re}
\DeclareMathOperator{\sgn}{sgn}
\DeclareMathOperator{\arccot}{arccot}
\DeclareMathOperator{\hyp}{_2F_1}

\newcommand{\Cal}{\mathcal}
\newcommand{\fr}{\mathfrak}
\newcommand{\R}{\mathbb{R}}        
\newcommand{\C}{\mathbb{C}}        
\newcommand{\Z}{\mathbb{Z}}        
\newcommand{\N}{\mathbb{N}}
\newcommand{\T}{\mathbb{T}}

\newcommand{\SL}{\mathit{SL}}
\newcommand{\SU}{\mathit{SU}}
\newcommand{\PSL}{\mathit{PSL}}
\newcommand{\squ}{\mathbin{\scriptstyle\square}}

\begin{document}
\title[Asymptotics of matrix coefficients]{On the asymptotics of matrix
coefficients\vspace{1mm} of representations of $\SL(2,\R)$}
\author{Viktor  Losert}
\address{Institut f\"ur Mathematik, Universit\"at Wien, Strudlhofg.\ 4,
  A 1090 Wien, Austria}
\email{Viktor.Losert@UNIVIE.AC.AT}
\date{22 December 2022}
\subjclass[2010]{Primary 22E30; Secondary 33C80, 42A99, 47A67}
\keywords{unitary representations, $\SL(2,\R)$, universal covering group,
matrix coefficients, asymptotics, Whittaker functions}

\begin{abstract}
We study matrix coefficients of the unitary (and also the completely bounded)
representations of $\SL(2,\R)$ and its universal covering group. We describe
the asymptotic distribution of  column vectors in terms of Whittaker
functions, exhibiting also a relationship to the Fourier transform.
\end{abstract}
\maketitle

\baselineskip=1.3\normalbaselineskip
For a unitary representation $T$ of a group $G$ on a separable Hilbert space
and an orthonormal basis $(e_m)$\,, the operators $T(g)$ are represented by
(infinite unitary) matrices $(t_{mn}(g))$. For $G=\SL(2,\R)$, standard
matrix coefficients are described by $\fr P_{mn}^\ell(x)$\,, functions on
the real interval $[1,\infty[$ (see \eqref{defP} for details). For
$\ell,m,n$ fixed, it is not hard to give an asymptotic approximation for
$\fr P_{mn}^\ell(x)$ \,(in particular, showing exponential decay of
$\fr P_{mn}^\ell(\cosh2\tau),\fr P_{mn}^\ell(e^\tau)$ for
$\tau\to\infty$), using standard properties of the hypergeometric function.
We review these results in Remark\;\ref{remprinc}\,(a) for the principal
series, with exceptions described in the proof of Theorem\;\ref{apcoprinc},
and in Remark\;\ref{remdisc}\,(a) (discrete series), \ref{di41}
(complementary series). Essentially, this has been
already found out by Bargmann \cite{B}\;\S11. Quite a lot of work
has been done (largely initiated by Harish--Chandra) on generalizations
to semisimple Lie groups (see \cite{Kn}\;Ch.\,VIII,\,XIV), in particular,
investigating the exponential
decay, using the differential equation satisfied by the coefficients and
relating the parameters to data from the Lie algebra. This gave an
important technical tool for many results from the general theory: \
subrepresentation theorem, intertwining operators, Plancherel theorem.

Ehrenpreis, Mautner (\cite{EM2}\;L.\,2.1) show that for any $q\ge1$ there
exists $c>0$ such that
\;$\lvert\fr P_{mn}^\ell(x)\rvert\le
c\;\bigl(\dfrac{\lvert m\rvert+\lvert\ell\rvert+1}{\lvert n\rvert+1}\bigr)^q$
holds for all $1\le x\le q\;,\;\ell\in\C\;,\;m,n\in\Z$ (in fact also for
$m,n\in\R$ with $m-n\in\Z$). In particular, for $\ell,n$ fixed,
$\fr P_{mn}^\ell(x)\to0$ for $\lvert m\rvert\to\infty$\,, uniformly
on bounded subsets of $[1,\infty[$\,.

For general semisimple $G$, Harish--Chandra (\cite{H}\;L.\,17.1) gives an
upper estimate, also for the derivatives of the matrix coefficients of
principal
series representations (formulated in the setting of Eisenstein integrals),
using his $\Xi$-function. The case of $\SL(2,\R)$ has been worked out in
\cite{Bk}\;Th.\,4.1.

In this  paper, we give an approximation of  $\fr P_{mn}^\ell(x)$ by
Whittaker functions (Theorems\;\ref{apcoprinc},\,\ref{apcodisc}). For
$\ell,n$ fixed, $m$ large, it discribes quite well the ``non-small"
part of the functions (see Remark\;\ref{remdisc}\,(c) for comparison to other
methods), giving also rise to some new limit relations. In particular,
one can describe the asymptotic behaviour of
the column vectors of the matrices $(\fr P_{mn}^\ell(x))$ for $x\to\infty$
(Corollary\;\ref{apl2}), exhibiting a connexion with (abelian)
Fourier transforms (Corollary\;\ref{apL2}).

Since it makes no technical difference, I have included
the case of the universal covering group of $\SL(2,\R)$, hence the
results apply to
all connected Lie groups that are locally isomorphic to $\SL(2,\R)$. We
discuss also uniformly bounded representations. There
(Remark\;\ref{remcomp}\,(a)\,), some differences occur
for the universal covering group, compared to the results of \cite{ACD}.
Furthermore (Remark\;\ref{remcomp}\,(b)\,) we make extensions to the listing
of $L^p$-conditions for the coefficients made in \cite{W}.

In \cite{CH},\cite{DH}, Cowling, De Canni\`ere and Haagerup studied
multipliers of the Fourier algebra of certain semisimple Lie groups $G$\,.
When $G$ has finite centre and $K$ is a maximal compact subgroup there is the
special case of $K$--bi-invariant (or ``radial") multipliers. In \cite{CH}
it was shown that these can be characterized by their restriction to a
complementary solvable group $H$ (arising from the Iwasawa decomposition) and
that these multipliers are always completely bounded (with equality of the
norms). For $G=\SL(2,\R)$, I have sketched in \cite{L1} an argument to get
similar results for general multipliers of the Fourier algebra, using
partial Fourier expansion with respect to $K$\,, showing in particular
that all multipliers are completely bounded. This is now the first part
of detailed proofs. It covers basically Prop.\,3 of \cite{L1} (and
related cases).
\section{Basic notions and Main results}\label{Main} 
\vskip 1mm
\begin{varrem}\label{di11}
Put $G=\SL(2,\R)$ (real $2\!\times\!\!2$\,-matrices of determinant one), let
$K$ be \vspace{1mm}the subgroup of rotations
$k_{\varphi}=
\begin{pmatrix}\cos\varphi & -\sin\varphi\\\sin\varphi& \cos\varphi
\end{pmatrix}$\vspace{.5mm} and $H$ the subgroup of \vspace{-1.5mm}matrices
$g=\begin{pmatrix}a & 0\\b& \frac1a
\end{pmatrix}$
with $a>0,\ b,\varphi\in\R$\,. Recall (part of the Iwasawa decomposition)
that $G=KH$\,, the decomposition of the elements $x=k\,g$ being unique.
$\widetilde G$ shall denote the universal covering group of $G$\,.
Set theoretically and topologically it can be described by its Iwasawa
decomposition $\widetilde G=\widetilde KH$\,, using that $H$ (being simply
connected) embeds as a subgroup into $\widetilde G$\,. We will identify
$\widetilde K$ (the
universal covering group of $K$) with $\R$\,, with covering homomorphism
\,$p(\varphi)=k_\varphi$ \;($\varphi\in\R\,;\linebreak
p\!:\widetilde K\to K$),
extending to \,$p\!:\widetilde G\to G$ \,by \,$p(g)=g$ for $g\in H$\,.
Then the subgroup $Z=\pi\Z$ \,of $\widetilde K$ gives the centre of
$\widetilde G$\,.\vspace{1mm}

The irreducible unitary representations of  $\SL(2,\R)$ have been described
by Barg\-mann (\cite{B}). This was extended to the universal covering group
by Pukanszky (\cite{P}). We use (essentially) the notations (and
parametrization) of Vilenkin (\cite{VK}) and extend it to $\widetilde G$\,,
closely like \cite{Sa}.

Let $\chi=(\ell,\epsilon)$, where
$\ell\in-\frac12+i\,\R\,,\:\epsilon\!\in\!\R$\,.
Then a (strongly continuous) unitary representation $T_\chi$ of
$\widetilde G$ is defined on $\Cal H=L^2(\R)$ (ordinary Lebesgue measure).
This makes up the {\it principal (unitary) series} of representations
(see Remark\,\ref{remrep}\,(a) for more details, references and
generalizations). In
the notation of \cite{Sa}\,Th.\,2.2.1, $T_\chi$ is related to $U_h(\cdot,s)$.
The representations
$T_{(\ell,\epsilon)}\,,\,T_{(\ell,\epsilon+1)}\,,
T_{(-\ell-1,\epsilon)}$ are
unitarily equivalent (\cite{Sa}\,Th.\,2.2.1,\,2.3.2), hence it would be
enough to consider $\ell=\linebreak-\frac12+i\lambda$ with $\lambda\ge0$ and
$-\frac12<\epsilon\le\frac12$\,. In this range, excepting
$\chi=(-\frac12,\frac12)$\,, the representations $T_\chi$ are irreducible
(\cite{Sa}\,Th.\,2.2.2) and mutually non-equivalent (as in \cite{VK}\,6.4.4;
\,$T_{(-\frac12,\frac12)}$ will appear again in the section\,\ref{disc} on
the discrete series).
\\
For $k\in\Z$ \,(hence $k\pi\in Z\subseteq\widetilde K$), one has\vspace{-1mm}
\begin{equation} \label{repcent}
T_\chi(k\pi)=e^{2\pi ik\epsilon}
\end{equation}
Thus $\epsilon$ determines the character induced by
$T_\chi$ on the centre $Z$\,. The values $\epsilon=0,\frac12$ give
representations of $\SL(2,\R)$,
analogously for the other connected Lie groups that are locally isomorphic
to $\SL(2,\R)$, e.g., restricting to $\epsilon=0$ corresponds to
$\PSL(2,\R)\ \,(=\SL(2,\R)/\{k_0,k_\pi\}$; {\it projective special linear
group}).
\\
For $g\in H$\,, $f\in\Cal H\,,x\in\R$\,, one has\vspace{-1mm}
\begin{equation} \label{repaff}
(T_\chi(g)f)(x)=a^{-2\ell}f(a^2x+a\,b)
\end{equation}
This does not depend on $\epsilon$ and corresponds
(up to a character determined by $\ell$) to the standard action of $H$ on $\R$
given by orientation preserving affine linear transformations.

Our main concern will be certain matrix coefficients of the representations.
We use (following \cite{VK}) a fixed orthonormal basis
$(e_m^\chi)_{m\in\Z}$ of the Hilbert space $\Cal H$ \,(see \eqref{defbasis}
below and the Remarks\,\ref{remrep}\,(b)\,). The basis vectors satisfy \;
$T_\chi(\varphi)\,e_m^\chi=e^{2i(m+\epsilon)\varphi}e_m^\chi$ for
$\varphi\in\R\ (=\widetilde K)$ \ (``elliptic basis"). For $g\in\widetilde G$,
we put \ $t_{mn}^\chi(g)=(T_\chi(g)\,e_n^\chi\mid e_m^\chi)$ \
(\ $(\cdot\!\mid\!\cdot)$
denoting the inner product of $\Cal H$). \vspace{.5mm}Thus
$T_\chi(g)\,e_n^\chi=\sum_{m\in\Z}t_{mn}^\chi(g)\,e_m^\chi$\,.
\ Put $A=\Bigl\{\begin{pmatrix}a & 0\\0& \frac1a\end{pmatrix}:
\;a>0\,\Bigr\}\ (<H)$\,.
One has $G=KAK$ \,(resp. $\widetilde G=\widetilde KA\widetilde K$\,;
\,for $\SL(2,\R)$ this is just the classical singular value decomposition).
Hence the functions $t_{mn}^\chi\!:\widetilde G\to\C$ are determined by
their values on $A$\,. We define for \,$\tau\ge0\,,\,m,n\in\Z$\vspace{-1mm}
\begin{equation} \label{defP}		
\fr P_{m+\epsilon\:n+\epsilon}^\ell(\cosh2\tau)=t_{mn}^\chi
\Bigl(\begin{pmatrix}e^\tau & 0 \\  0 & e^{-\tau}
\end{pmatrix} \Bigr)
\end{equation}
(\cite{VK}\,6.5.2\,(4), see also p.\,314). Up to the constant factor
$e^{-2i\tau\lambda}$ arising in the formula \eqref{repaff} for $T_\chi(g)$
when $g=\bigl(\begin{smallmatrix}e^\tau & 0 \\ \!0 & e^{-\tau}
\end{smallmatrix}\bigr)$, this gives matrix representations for the
operator $f\mapsto e^\tau f(e^{2\tau}\,\cdot)$ on $L^2(\R)$ with respect to
the family of bases $(e_m^\chi)$. We take\vspace{-2mm}
\begin{equation} \label{defbasis}
e_m^\chi(t)=
\dfrac1{\sqrt{\pi}}\,e^{-i\pi(m+\epsilon)}\,e^{2i(m+\epsilon)\arctan(t)}
(t^2+1)^\ell\qquad (m\in\Z).
\end{equation}
For $t\in\R$ one has $e^{-i\pi(m+\epsilon)}\,e^{2i(m+\epsilon)\arctan(t)}=
(t-i)^{m+\epsilon}(t+i)^{-m-\epsilon}$  (we always use the standard
branches for complex powers).
For $\epsilon=0$ (``first principal series" of representations of $\SL(2,\R)$)
this reduces to the formula given in \cite{L1}\,p.\,12.
The definition of $\fr P$ integrates $\epsilon$ by shifting the index set to
$\epsilon+\Z$\,. This includes the equivalence of $T_{(\ell,\epsilon)}$ and
$T_{(\ell,\epsilon+1)}$\,, since
$e_m^{(\ell,\epsilon)}=e_{m-1}^{(\ell,\epsilon+1)}$ implies
\,$t_{mn}^{(\ell,\epsilon)}=t_{m-1\,n-1}^{(\ell,\epsilon+1)}$\,.
For example, $\epsilon=\frac12$ gives the ``second principal series" of
representations of $\SL(2,\R)$ with coefficients $\fr P_{mn}^\ell$ where
$m,n$ are half-integers ($m,n\in\frac12+\Z$).\\[1mm]
Upon first reading, one may skip the next remarks (with technical details)
and pass to Theorem\;\ref{apcoprinc}.\vspace*{-2.5mm}
\begin{Rems}	  \label{remrep}
(a) {\it Details on the representations:}
\;Traditionally (following Barg\-mann), many authors first define the
representations for the isomorphic group
$\SU(1,1)=\{\bigl(\begin{smallmatrix}\alpha & \bar\beta \\ \beta & \bar\alpha
\end{smallmatrix}\bigr)\!\in\SL(2,\C)\}$ \,(and its universal covering group
$\widetilde{\SU}$).
In this version, $T_\chi$ (or more precisely $T_\chi^{\T}$) can be defined
for arbitrary $\chi=(\ell,\epsilon)\in\C^2$ as
a strongly continuous representation of $\widetilde{\SU}$ by bounded operators
on $L^2(\T)$
\,(where $\T=\{z\in\C:\lvert z\rvert=1\}$ with normalized Haar measure).
Infinitesimally, it corresponds to the Harish-Chandra module $U(\nu^+,\nu^-)$
of the Lie algebra (notation of \cite{HT}\,p.\,61,\,(1.2.5), with
$\nu^+=-\ell+\epsilon\,,\ \nu^-=-\ell-\epsilon$\,), see \cite{VK}\,p.\,300
and \cite{Sa}\,p.\,38. In the notation of \cite{Sa}, $T_\chi=U_h(\cdot,s)$
with $s=-\ell-\frac12\,,\,h=\epsilon$\,. By \cite{Sa}\,Th.\,2.2.1,
$T_\chi$ depends analytically on $\ell,\epsilon$\,. In the notation of
\cite{P}\,p.\,102, the unitary case (i.e. $\re(\ell)=-\frac12\,,\;
\epsilon\in\R$) is denoted as $C_q^{(\tau)}$ with
$q=\lvert\ell\rvert^2>\frac14\,,\;\tau=\epsilon$ (the case $q=\frac14$
\,i.e.\!
$\ell=-\frac12$ is shifted there to the complementary and the discrete series).
See Remark\;\ref{remcomp}\,(c) for further properties.
For $\epsilon\in\frac12\,\Z$ one gets representations of $\SU(1,1)$.
\\
Explicit formulas for the case of $\SU(1,1)$ are given in
\cite{VK}\,p.\,299,\,(4) \,(writing $\tau$ instead of $\ell$),
\,\cite{Sa}\,p.\,12,\,(1.1.2) and then for $\widetilde{\SU}$ in
\cite{VK}\,p.\,310,\,(1) and \,\cite{Sa}\,p.\,35, (2.2.9). But for $\SL(2,\R)$
quite a number of variations are used by some authors. Observe that
\cite{VK} and \cite{Sa} apply different isomorphisms for the transfer from
$\SU(1,1)$ to
$\SL(2,\R)$ (and also for the corresponding transformations from $L^2(\T)$
to Hilbert spaces on $\R$). As a result, the behaviour on the centre
(see \eqref{repcent}
above) becomes different in the version of \cite{Sa} (for $h=0,\frac12$ he
writes $T_h(\cdot,s)$
for the corresponding representations of $\SL(2,\R)$ on $L^2(\R)$ --
\cite{Sa}\,p.\,12,\,(1.1.4); but the generalization to $\widetilde G$ is
not elaborated in full detail; the different choice of the isomorphism implies
that in the case of $\widetilde G$ the correspondence becomes
$h=-\epsilon$\,, see e.g. \cite{Sa}\,L.\,3.1.3,\,p.\,59).
\\[0mm plus 1mm]
A direct construction for $\SL(2,\R)$ (similar to that in \cite{VK}\,6.4.1)
has been given in \cite{ACD}\;sec.\,3. This extends easily to $\widetilde G$
(again,
we stick to the style of \cite{VK})\,: \  Let (in slightly misleading
notation) \,$\widetilde{\R^2}$ be the universal covering space of
$\R^2\setminus\{0\}$. This can be written as
\,$\widetilde{\R^2}=\R\times]0,\infty[$ with covering map
$q(\omega,t)=(t\,\cos\omega,t\,\sin\omega)$\linebreak
($q\!:\widetilde{\R^2}\to\R^2\setminus\{0\}$; polar coordinates). Let
$\widetilde{\Cal D}_\chi$ be the space of $C^\infty$-functions
$F\!:\widetilde{\R^2}\to\C$ such that $F(\omega,a\,t)=a^{2\ell}F(\omega,t)$
and $F(\omega+\pi,t)=e^{-2\pi i\epsilon}F(\omega,t)$ for
$\omega\in\R\,,\; a,t>0$\,. Considering $\R^2$ as {\it row} vectors, $G$ acts
on $\R^2\setminus\{0\}$ by {\it right} multiplication ($\cdot$) and this lifts
to a right action of $\widetilde G$ on $\widetilde{\R^2}$\,, denoted by
$\zeta\squ g$\,, i.e., $q(\zeta\squ g)=q(\zeta)\cdot p(g)$ for
$\zeta\in\widetilde{\R^2}\,,\,g\in\widetilde G$\,.
Observe that our parametrization of $K$ implies that
$(\omega,t)\squ\varphi=(\omega-\varphi,t)$ for $\varphi\in\widetilde K$
and that the (half-)\,sheets $[k\pi,(k+1)\pi[\,\times\,]0,\infty[\ \;(k\in\Z)$ are
$H$-invariant.
\\[0mm plus 1mm]
Then \;$(T_\chi^s(g)F)(\zeta)=F(\zeta\squ g)$ \,defines a representation of
$\widetilde G$ on $\widetilde{\Cal D}_\chi$ (``smooth version"),
satisfying \eqref{repcent} on the centre. $\widetilde{\Cal D}_\chi$ and
$T_\chi^s$ depend just on the coset of $\epsilon\!\!\mod1$.
The embedding of $\R$ into
$\R^2$ used in \cite{VK}\;p.\,376 (which is
different from that in \cite{ACD}\;p.\,132) lifts
to an embedding into $\widetilde{\R^2}$ given
by \,$x\mapsto(\arccot x,\sqrt{x^2+1})$, i.e.,
$q(\arccot x,\sqrt{x^2+1})=(x,1)$
\,(we use the principal branch of $\arccot$, i.e., $\arccot x\in\,]0,\pi[$\,).
Considering in this way $\R$ as a subset of $\widetilde{\R^2}$, it is easy
to see that functions $F\in\widetilde{\Cal D}_\chi$ are determined uniquely
by their restriction to $\R$ \,and one obtains a (isomorphic) space
$\Cal D_\chi^\R$ of $C^\infty$-functions on $\R$ \,(see
Remark\;\ref{remcomp}\,(c) for further details).
\\[-.5mm]
Explicitly, this gives for $f\in\Cal D_\chi^\R\,,\ g\in\widetilde G$ with
\vspace{-1mm}
$p(g)=\begin{pmatrix}a & b\\c& d\end{pmatrix} $ and
$x\in\R$ with $bx+d\neq0$, \ \;
$(T_\chi^s(g)f)(x)=
e^{2k\pi i\epsilon}\,\lvert bx+d\rvert^{2\ell}
f\bigl(\dfrac{ax+c}{bx+d}\bigr)$, \,where \vspace{1mm}
$k\in\Z$ is determined by $(\arccot x,\sqrt{x^2+1})\squ g=(\omega-k\pi,\dots)$
with $\omega\in\,]0,\pi[$\,. For $g\in H$ (where $k=0$), one obtains
$\eqref{repaff}$.
For $\ell=-\frac12+i\lambda\,,\ \epsilon,\lambda\in\R$\,, $\Cal D_\chi$ is
a dense subset of $L^2(\R)$ and $T_\chi^s$ extends
to a unitary representation $T_\chi$ (or more precisely $T_\chi^{\R}$) of
$\widetilde G$ on $L^2(\R)$.
For $\epsilon=0,\frac12$ (observe that $k\equiv\frac{1-sgn(bx+d)}2\!\mod2$)
this coincides 
with \cite{VK}\,7.1.2\,(3)
(denoted there as $\hat T_\chi$), \cite{Sa}\,p.\,12,\,(1.1.4) and
\cite{KS}\;p.\,3 (with $s=\ell+1$).
There is also a ``compact model" for the representations,
obtained by restricting $F$ to $[0,\pi]\times\{1\}$\,, but the explicit
formulas are not as nice as in the case of $\SU(1,1)$.
\\[0mm plus.5mm]
Basically, this is the induced representation from the subgroup $HZ$ of
$\widetilde G$\,, coming from the character
$\psi(g)=a^{2\ell+1},\;\psi(k\pi)=e^{2k\pi i\epsilon}\ (g\in H\,,\,k\pi\in Z)$,
when using {\it right} translations of $\widetilde G$ and right cosets
(as done originally by Mackey). If $H$ is replaced by upper triangular
matrices, this corresponds to the embedding of $\R$ used in \cite{ACD}.
Similarly, one can use the action of
$G$ on $\R^2$ by left multiplication which corresponds to the realization
of induced representations by left translations. This produces somewhat
different expressions for the representations and slight deviations in the
coefficients.
\vskip.5mm plus 1.5mm
\item[(b)] {\it On the coefficient functions:}
In \cite{VK}\;6.5.2 the functions $\fr P_{m+\epsilon\,n+\epsilon}^\ell$
are defined for $\SU(1,1)$, using the basis $(z^{-m})_{m\in\Z}$ of
$L^2(\T)$ \,(and this works for arbitrary $\ell,\epsilon\in\C$). As mentioned
in (a), various transformations are used in the
literature to get to $\SL(2,\R)$. We use here
$h=\begin{pmatrix}1 & 1\\i& -i\end{pmatrix}$ which defines an isomorphism
$g\mapsto hgh^{-1}$ from $\SU(1,1)$ to $\SL(2,\R)$ (and then also for the
covering groups) and transforms the one-parameter group used for the
definition of $\fr P$ in the way leading to \eqref{defP}.
For a function $f\!:\T\to\C$ put
$(E_\chi f)(x)=
\frac1{\sqrt\pi}\,e^{-2i\epsilon\arccot x}(x^2+1)^\ell
f(\frac{x+i}{x-i})\ \ (\text{giving }\linebreak E_\chi f\!:\R\to\C$\,). For
$\ell=-\frac12+i\lambda\,,\ \epsilon,\lambda\in\R$\,,
$E_\chi$ defines an
isometric isomorphism of $L^2(\T)$ and $L^2(\R)$, linking the
representations $T_\chi$ of $\SU(1,1)$ (and its universal covering group)
as defined (coherently) in \cite{VK} and \cite{Sa} with those of
$\widetilde G$  described in (a), 
i.e.,
$T_\chi^{\R}(hgh^{-1})=E_\chi\,T_\chi^{\T}(g)\,E_\chi^{-1}$
(for $g\in\widetilde{\SU}$).
It gives the basis of \eqref{defbasis},
i.e., $e_m^\chi=E_\chi e_m$\,, where $e_m(z)=z^{-m}$, hence \eqref{defP}
follows
\ (observe that the basis $(\psi_{n\chi})$ appearing in (1) of
\cite{VK}\,7.1.3 is not the one that arises from the procedure in
\cite{VK}\,7.1.2 and that it does not exactly produce the functions
$\fr P_{m+\epsilon\,n+\epsilon}^\ell$).
For general $\chi\in\C^2,\ E_\chi$ induces a Hilbert space structure on
its image $E_\chi L^2(\T)$, and as above this defines a representation $T_\chi$
(or more precisely $T_\chi^{\R}$) of $\widetilde G$ by bounded operators
on a Hilbert space $\Cal H_\chi$ on $\R$\,, containing $\Cal D_\chi^\R$\,.
Again, $e_m^\chi\ (=E_\chi e_m)$
of \eqref{defbasis} gives an orthonormal basis of $\Cal H_\chi$ \,and
$T_\chi^{\R}$ extends $T_\chi^s$ \,(explicit formula in (a)).
$T_\chi,\Cal H_\chi$ depend only on the coset of \,$\epsilon\!\!\mod1$
(but $E_\chi\,,e_m^\chi$ depend on $\epsilon$), in
contrast to the models on $\T$ where
$T_{(\ell,\epsilon)}^{\T}\neq T_{(\ell,\epsilon')}^{\T}$
for $\epsilon\neq\epsilon'$, hence
$T_{(\ell,\epsilon)}^{\T},T_{(\ell,\epsilon')}^{\T}$ are just unitarily
equivalent for $\epsilon-\epsilon'\in\Z$ \,(see Remark\;\ref{remcomp}\,(c)
for further properties).
\end{Rems}
We will show now that for large $m$ or $x$\,,
$\fr P_{mn}^\ell(x)$ can be approximated quite well (already for
moderately large values) using
the Whittaker function (see the next subsection for definition).
Basically, this corresponds to a  ``confluent limit" for the hypergeometric
function $\hyp$\,, compare \cite{Lu}\;Sec.\,III.3.5, \cite{VK}\;3.5.2\,(14).
We give also an error estimate, showing some kind 
of uniformity for fixed $n$ and $\ell$\,. The proof will use the integral
representation of $\hyp$ by Whittaker functions (see \eqref{averwhit}).
\newpage\noindent
We write $\N_0=\{0,1,\cdots\}$.
\end{varrem}
\begin{Thm} \label{apcoprinc} 
For \,$n\in\R$, \,$\ell=-\frac12+i\lambda$ ($\lambda\in\R$) fixed,
\begin{multline*}
\sup\,\biggl\{\ \Bigl\lvert\,\fr P_{mn}^\ell(x)\,-\,
\frac{(-1)^{m-n}}{m^{\ell+1}\Gamma(n-\ell)}\,
W_{n,\,i\lambda}\Bigl(\frac{2\,m}x\Bigr)\,\Bigr\rvert\ \:
x^{\textstyle\frac12}\,m^2\ :\\[-2mm]
x\ge1,\,m\in n+\N_0\,,\,m>0\,\biggr\}\quad
\text{is finite.}
\end{multline*}\vspace{-9mm}
\end{Thm}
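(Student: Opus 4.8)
The plan is to reduce everything to the Gauss hypergeometric function and then pass to a confluent limit in a controlled, integral form. First I would write out $\fr P_{mn}^\ell(x)$ explicitly: by \eqref{defP} and the formula of \cite{VK}\,6.5.2, for $m\in n+\N_0$ the coefficient equals an elementary factor (a product of powers of $x\pm1$ together with a ratio of $\Gamma$-factors) times a hypergeometric function $\hyp(a,b_m;c;w)$ in which one numerator parameter $b_m$ grows linearly in $m$, the remaining parameters $a,c$ are fixed (linear in $n,\ell$), and the argument $w=w(x)$ is a fixed function of $x$ tending to $0$ as $x\to\infty$, with $b_m\,w(x)\sim 2m/x$. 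This is the confluent regime, but with a twist that is essential here: a plain term-by-term confluent limit would produce the \emph{regular} solution ${}_1F_1$ (an $M$-Whittaker function), which \emph{grows}, whereas $\fr P_{mn}^\ell$ decays and so must match the \emph{recessive} function $W_{n,i\lambda}(2m/x)$. Matching powers and $\Gamma$-factors through Whittaker's relation $W_{\kappa,\mu}(z)=e^{-z/2}z^{\mu+1/2}U(\mu-\kappa+\tfrac12,2\mu+1;z)$ identifies $\kappa=n$, $\mu=i\lambda$, pins down the normalization $m^{-\ell-1}/\Gamma(n-\ell)$, and produces the sign $(-1)^{m-n}$ from the branch conventions of \eqref{defbasis}.

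To make this quantitative with a uniform remainder, and to obtain the recessive solution, I would not expand the series but use the integral representation \eqref{averwhit}, which presents $\hyp$ (hence $\fr P_{mn}^\ell$ after extracting the elementary factor) as an integral over $]0,\infty[$ whose integrand is $W_{n,i\lambda}$ evaluated at an $(m,x)$-dependent point $\tfrac{2m}x\,\phi_m(t)$ against a kernel $K_m(t)\,dt$ that concentrates, as $m\to\infty$, at the value $\phi_m=1$. Replacing $\phi_m(t)$ by $1$ reproduces exactly the asserted main term $\tfrac{(-1)^{m-n}}{m^{\ell+1}\Gamma(n-\ell)}\,W_{n,i\lambda}(2m/x)$, so that the error is the integral of $K_m(t)\bigl[W_{n,i\lambda}\bigl(\tfrac{2m}x\phi_m(t)\bigr)-W_{n,i\lambda}\bigl(\tfrac{2m}x\bigr)\bigr]$.

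The remaining task is to bound this remainder by $C\,x^{-1/2}m^{-2}$ uniformly for $x\ge1$, $m\in n+\N_0$. I would Taylor-expand the bracket to second order in $\tfrac{2m}x(\phi_m(t)-1)$; since $\phi_m(t)-1=O(1/m)$ on the bulk of $K_m$, the apparent size is only $O(1/m)$, and the gain to $O(1/m^2)$ must come from a cancellation of the first-order term---either because $K_m$ has vanishing first moment about $\phi_m=1$, or because the argument $2m/x$ and the normalization sit at the symmetric (median) point of the confluent limit. Establishing this cancellation is the technical heart of the estimate. The factor $x^{1/2}$ is then dictated by the size of $W_{n,i\lambda}$ and its first two derivatives at $2m/x$: for $x\gtrsim m$ the argument is small and the two solutions $z^{1/2\pm i\lambda}$ give $\lvert W_{n,i\lambda}(2m/x)\rvert=O((m/x)^{1/2})$, which against the prefactor $m^{-1/2}$ yields precisely the $x^{-1/2}$ scale one must beat; for $x\lesssim m$ the argument is large and the factor $e^{-m/x}$ renders everything negligible, consistently with $\fr P_{mn}^\ell(x)$ and its approximation both being exponentially small near $x=1$, where $\fr P_{mn}^\ell(1)=\delta_{mn}=0$ for $m\neq n$.

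The main obstacle I anticipate is uniformity across the transition region $x\sim 2m$, where $W_{n,i\lambda}$ passes from oscillatory power behaviour to exponential decay and neither one-sided asymptotic holds on its own. I expect to handle this by extracting from \eqref{averwhit} (or from the $U$-function representation of $W$) bounds for $W_{n,i\lambda}$ and its first two derivatives valid on all of $]0,\infty[$ with the correct joint dependence on the argument and on $m$, then matching them to the kernel estimates and verifying that the constants do not degenerate as $x\to1^+$.
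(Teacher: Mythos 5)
Your overall strategy is the paper's: the proof there also runs through the integral representation \eqref{averwhit}, exploits the concentration of the gamma kernel $e^{-t}t^{m-1}/\Gamma(m)$ at $t=m$, and Taylor-expands $W_{n,i\lambda}(\tfrac{2t}x)$ about $t=m$ (Lemma\;\ref{intasym}). But your error analysis miscounts the orders, and the miscount hides the actual technical heart of the argument. On the bulk of the kernel one has $t-m=O(\sqrt m)$, so $\phi_m(t)-1=O(m^{-1/2})$, not $O(m^{-1})$. The first-order Taylor term does vanish exactly — the mean of the $\Gamma(m)$-distribution is $m$, i.e.\ the first central moment is $0$, which is the first of your two guesses — but that only brings you from $O(m^{-1/2})$ to the second-order term. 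That term, driven by the second central moment $\gamma_2=m$, equals $\tfrac{2m}{x^2}W''_{n,i\lambda}(\tfrac{2m}x)$, which by Whittaker's equation is $\bigl(\tfrac m{2x^2}-\tfrac nx-\tfrac{\frac14+\lambda^2}{2m}\bigr)W_{n,i\lambda}(\tfrac{2m}x)$: of relative size $O(1/m)$ in the regime $x\gtrsim m$ where the main term is not exponentially small, and it does \emph{not} vanish. No cancellation of the first-order term can get you to the claimed $O(m^{-2})$ relative error; you must retain the second-order term. Relatedly, your claim that ``replacing $\phi_m(t)$ by $1$ reproduces exactly the asserted main term'' is false: the elementary prefactor in \eqref{averwhit} is $\bigl(1-\tfrac1{x^2}\bigr)^{m/2}\bigl(\tfrac{x+1}{x-1}\bigr)^{n/2}\,\Gamma(m)/\Gamma(m+\ell+1)$ (after the index swap $\fr P_{mn}^\ell=(-1)^{m-n}\fr P_{nm}^{-\ell-1}$), which equals $m^{-\ell-1}$ only up to relative corrections $-\tfrac m{2x^2}+\tfrac nx+\tfrac{\frac14+\lambda^2}{2m}+O(\tfrac1{m^2}+\tfrac{m^2}{x^4}+\tfrac{m^3}{x^6})$. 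The point of the paper's computation \eqref{pappr} is precisely that these prefactor corrections cancel, to the required order, against the retained second-order Taylor term after Whittaker's differential equation is applied. Without identifying this cancellation your remainder is stuck at $O(x^{-1/2}m^{-1})$, one full power of $m$ short.

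Two further points. First, the Taylor expansion must be carried to fourth order with a uniform remainder: the third and fourth central moments ($2m$ and $3m(m+2)$) each contribute $O(y^{1/2}m^{-2})$ relative terms, and the tails $t\le m/2$, $t\ge 2m$ have to be cut off separately (Lemma\;\ref{intrem}) before the derivative bounds $W^{(k)}_{n,i\lambda}(z)=O(z^{1/2-k})$ can be used. Second, your uniform bound $W_{n,i\lambda}(z)=O(\sqrt z)$ near $z=0$, on which the whole scheme rests, fails exactly when $\lambda=0$ and $n\notin\frac12+\Z$ (there $W_{n,0}(z)\asymp\sqrt z\,\lvert\log z\rvert$); these cases are included in the statement ($\lambda\in\R$ allows $\lambda=0$) and the paper needs a separate argument, subtracting the explicit function $\tfrac1{\Gamma(\frac12-n)}\sqrt t\,\log t$ and integrating it exactly against the gamma kernel, to recover the stated $m^2$ rather than $m^{2-\varepsilon}$.
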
\noindent
In the special case $\ell=-\frac12\,,\,n\in\frac12-\N$ (arising for
$\chi=(-\frac12,\frac12)$) one uses the analytic extension
of $\frac1{\Gamma(t)}$ by $0$ for $t=0,-1,\dots$
\\
Thus $\frac{(-1)^{m-n}}{m^{\ell+1}\Gamma(n-\ell)}\,
W_{n,\,i\lambda}\bigl(\frac{2\,m}x\bigr)$ gives an approximation for
$\fr P_{mn}^\ell(x)$ with error bound $\frac c{\sqrt{x}\,m^2}$\,,
uniformly on the $(x,m)$-domain above.\vspace{.5mm}
In particular,
$\sqrt{m}\,\fr P_{mn}^\ell(m\,x)-
\frac{(-1)^{m-n}}{m^{i\,\lambda}\Gamma(n-\ell)}\,
W_{n,\,i\lambda}(\frac2x)\to0$ \,for $m\to\infty\ (m\in n+\Z)$,
compare with Remark\,\ref{remprinc}\,(a). See below for $m<0$.
\\[2mm]
This supplies an asymptotic expression for the column vectors of the
matrices $(\fr P_{mn}^\ell(x))$ \;(for the $l^2$-norm).
\vspace{-3mm}
\begin{Cor} \label{apl2} 
For $n,\ell$ fixed as in the Theorem, one has
\begin{align*}
\biggl\lVert\ \Bigl(\fr P_{mn}^\ell(x)\,-\,
&\frac{(-1)^{m-n}}{m^{\ell+1}\Gamma(n-\ell)}\,
W_{n,\,i\lambda}\Bigl(\frac{2\,m}x\Bigr)\,\Bigr)_{m>0}\;\biggr\rVert_2\
\to\ 0 \qquad\text{and}
\\[.5mm]
\biggl\lVert\ \Bigl(\fr P_{mn}^\ell(\frac{x+\frac1x}2)\,-\,
&\frac{(-1)^{m-n}}{m^{\ell+1}\Gamma(n-\ell)}\,
W_{n,\,i\lambda}\Bigl(\frac{4\,m}x\Bigr)\,\Bigr)_{m>0}\;\biggr\rVert_2\
\to\ 0 \qquad
\text{for \ }x\to\infty\,.\vspace{2mm}
\end{align*}
\end{Cor}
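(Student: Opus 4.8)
The plan is to reduce both assertions to the uniform pointwise estimate of Theorem~\ref{apcoprinc}, which for fixed $n,\ell$ reads
\[
\Bigl\lvert\,\fr P_{mn}^\ell(x)-\tfrac{(-1)^{m-n}}{m^{\ell+1}\Gamma(n-\ell)}\,W_{n,i\lambda}\bigl(\tfrac{2m}x\bigr)\Bigr\rvert\ \le\ \frac{c}{\sqrt{x}\,m^2}
\qquad(x\ge1,\ m\in n+\N_0,\ m>0).
\]
For the first assertion this is almost immediate: squaring and summing in $m$ gives
\[
\Bigl\lVert\bigl(\fr P_{mn}^\ell(x)-\tfrac{(-1)^{m-n}}{m^{\ell+1}\Gamma(n-\ell)}W_{n,i\lambda}(\tfrac{2m}x)\bigr)_{m>0}\Bigr\rVert_2^2\ \le\ \frac{c^2}{x}\sum_{m\in n+\N_0,\,m>0}\frac1{m^4},
\]
and the $m$-sum is a finite constant independent of $x$ (the summand decays like $m^{-4}$), so the left-hand side is $O(1/x)\to0$.

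For the second assertion I would first put $\tilde x=\frac{x+1/x}2$ and note that $\tilde x\ge1$ for $x\ge1$ (by AM--GM) and $\tilde x\to\infty$ as $x\to\infty$, so the Theorem applies at $\tilde x$. It shows that $\fr P_{mn}^\ell(\tilde x)$ is approximated by $\frac{(-1)^{m-n}}{m^{\ell+1}\Gamma(n-\ell)}W_{n,i\lambda}(\frac{2m}{\tilde x})$ with error $\le\frac{c}{\sqrt{\tilde x}\,m^2}$, whose $\ell^2$-norm is $O(\tilde x^{-1/2})\to0$ exactly as above. Since $\frac{2m}{\tilde x}=\frac{4m}{x+1/x}$, it only remains to pass from the Whittaker argument $\frac{4m}{x+1/x}$ to the stated $\frac{4m}x$, i.e.\ to show
\[
\Bigl\lVert\Bigl(\tfrac{(-1)^{m-n}}{m^{\ell+1}\Gamma(n-\ell)}\bigl[W_{n,i\lambda}(\tfrac{4m}{x+1/x})-W_{n,i\lambda}(\tfrac{4m}x)\bigr]\Bigr)_{m>0}\Bigr\rVert_2\to0 .
\]

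To estimate this difference I would use the mean value theorem. The two arguments differ by $\bigl\lvert\frac{4m}x-\frac{4m}{x+1/x}\bigr\rvert=\frac{4m}{x^3+x}\le\frac{4m}{x^3}$, and they lie within a factor $1+O(x^{-2})$ of each other, so
\[
\bigl\lvert W_{n,i\lambda}(\tfrac{4m}{x+1/x})-W_{n,i\lambda}(\tfrac{4m}x)\bigr\rvert\ \le\ \frac{4m}{x^3}\,\sup_{w}\bigl\lvert W_{n,i\lambda}'(w)\bigr\rvert,
\]
the supremum taken over the short interval between the two arguments, which is comparable to $\lvert W_{n,i\lambda}'(\tfrac{4m}x)\rvert$. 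Using $\lvert m^{\ell+1}\rvert=\sqrt m$, the squared $\ell^2$-norm is then bounded by a constant times $\frac1{x^6}\sum_{m>0}m\,\lvert W_{n,i\lambda}'(\tfrac{4m}x)\rvert^2$, which I would read as a Riemann sum of step $\frac4x$ for $\int_0^\infty w\,\lvert W_{n,i\lambda}'(w)\rvert^2\,dw$; this gives $\sum_m m\,\lvert W_{n,i\lambda}'(\tfrac{4m}x)\rvert^2\le C\,x^2\!\int_0^\infty w\lvert W_{n,i\lambda}'(w)\rvert^2dw$ and hence the whole quantity is $O(1/x^4)\to0$.

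The main obstacle, and the only genuinely analytic point, is to verify that $\int_0^\infty w\,\lvert W_{n,i\lambda}'(w)\rvert^2\,dw<\infty$ and to make the Riemann-sum comparison uniform in $x$. Convergence follows from the boundary behaviour of the Whittaker function for $n$ real and second index $i\lambda$: as $w\to\infty$ one has $W_{n,i\lambda}(w)\sim e^{-w/2}w^{n}$, so $W_{n,i\lambda}'$ decays exponentially and $w\lvert W_{n,i\lambda}'\rvert^2=O(w^{2n+1}e^{-w})$; as $w\to0^+$ one has $W_{n,i\lambda}(w)=O(w^{1/2})$ with $W_{n,i\lambda}'=O(w^{-1/2})$ (up to a logarithmic factor when $\lambda=0$), so $w\lvert W_{n,i\lambda}'\rvert^2=O(1)$ there. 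Thus $w\lvert W_{n,i\lambda}'(w)\rvert^2$ is dominated by a fixed integrable function, bounded near $0$ and of rapid decay at $\infty$, and the step-$\frac4x$ Riemann sums are majorised by a uniform multiple of the integral. The degenerate value $\lambda=0$ and the special case $\ell=-\frac12,\ n\in\frac12-\N$ (where $1/\Gamma(n-\ell)$ vanishes and the corresponding terms drop out) are covered by the same bounds together with the convention stated after the Theorem.
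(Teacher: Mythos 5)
Your argument follows the same route as the paper's (very terse) proof: for the first limit, square the uniform error bound $c/(\sqrt{x}\,m^2)$ of Theorem~\ref{apcoprinc} and sum over $m$ to get $O(1/x)$; for the second, apply the theorem at $\tilde x=\frac{x+1/x}2$ and control the shift of the Whittaker argument from $\frac{4m}{x+1/x}$ to $\frac{4m}x$ by an estimate on $W'_{n,i\lambda}$, which is exactly what the paper indicates. The analytic input you use ($W'_{n,i\lambda}(w)=O(w^{-1/2})$, up to a logarithm when $\lambda=0$, near $0$ and exponential decay at $\infty$) is the right one; the Riemann-sum comparison is heavier machinery than needed, since a direct termwise bound ($O(x^{-5/2})$ per term for the $O(x)$ indices with $4m/x\le1$, exponentially small beyond) already yields the same $O(x^{-4})$, but it is not wrong.

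There is one concrete omission. The $\ell^2$-norm in the Corollary runs over the whole column index set, i.e.\ over all $m\in n+\Z$ with $m>0$, whereas Theorem~\ref{apcoprinc} (and hence each of your sums) covers only $m\in n+\N_0$, i.e.\ $m\ge n$. For $n>1$ this leaves out the finitely many indices with $0<m<n$, for which no error bound is available. The paper disposes of them by noting that for each such fixed $m$ both $\fr P_{mn}^\ell(x)\to0$ and $W_{n,i\lambda}\bigl(\frac{2m}x\bigr)\to0$ individually as $x\to\infty$, so this finite block contributes $o(1)$ to the norm; you need to add this step. A second, harmless, imprecision: the supremum of $\lvert W'_{n,i\lambda}\rvert$ over the interval between $\frac{4m}{x+1/x}$ and $\frac{4m}x$ is not literally comparable to $\lvert W'_{n,i\lambda}(\frac{4m}x)\rvert$ when $m$ is large relative to $x$ (the correction factor $e^{w\delta/2}$ with $w=\frac{4m}x$, $\delta=\frac1{x^2+1}$ is then unbounded), but the envelope bound $O(w^{n}e^{-w/4})$ valid on the whole interval still decays fast enough for your estimate, so the conclusion stands.
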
\noindent
Since $\fr P_{mn}^\ell=\fr P_{-m\,-n}^\ell$ \,(\cite{VK}\,6.5.5\,(1)),
Theorem\;\ref{apcoprinc} easily provides an approximation of 
$\fr P_{mn}^\ell(x)$ by 
$\frac{(-1)^{m-n}}{{\lvert m\rvert}^{\ell+1}\Gamma(-n-\ell)}\,
W_{-n,\,i\lambda}\bigl(\frac{2\,\lvert m\rvert}x\bigr)$ for $m<0$
with $m\in n-\N_0$\,.\vspace{.2mm}
This gives an approximation for the ``lower half"
$\bigl(\,\fr P_{mn}^\ell(x)\bigl)_{m<0}$ in terms of $W_{-n,\,i\lambda}$
(showing some asymmetry when $n\neq0$) .
Since \,$\fr P_{mn}^\ell=(-1)^{m-n}\,\fr P_{nm}^{-\ell-1}$
(\cite{VK}\,p.\,319\,(6)),
similar approximations are obtained for the row vectors (see \eqref{th1mod}
below). Recall that by \eqref{defP}, \,$\fr P_{mn}^\ell(\frac{x+\frac1x}2)$
\;(where $x\ge1$\,, \,$m,\!n\in\epsilon+\Z$) \,gives the matrix coefficients
of the \vspace{.5mm}operator
\,$T_\chi \Bigl(\begin{pmatrix}\sqrt x & 0 \\  0 & \frac1{\sqrt x}
\end{pmatrix} \Bigr)$
with respect to the basis $(e_{m-\epsilon}^\chi)$\,. The second
form of Corollary\,\ref{apl2} corresponds to the statement in
\cite{L1}\,Prop.\,3.\vspace{1mm}

The limit arising in Corollary\;\ref{apl2} is closely related to the
{\it Fourier transform} of~$e_{n-\epsilon}^\chi$ (on $\R$).
For $f\in L^1(\R)$ we take the version
$\hat f(y)=\dfrac1{\sqrt{2\pi}}\,\int\limits_{\R}e^{-ity}f(t)\,dt$
\,($y\in\R$) and
then the same notation will be used for the extension to $L^2$ and to
tempered distributions. From \cite{E}\,p.119\,(12) (the sign in the formula
for $y>0$ is wrong) or \cite{VK}\,p.\,444\,(15$'$) we get for
$n\in\epsilon+\Z$ and $\chi=(\ell,\epsilon)$, $y\neq0$
\begin{equation} \label{fourbasis}
\widehat{e_{n-\epsilon}^\chi}(y)\ =\ e^{-i\pi n}\,
\frac{2^{\ell+\frac12}\,\lvert y\rvert^{-\ell-1}}
{\Gamma(\sgn(y)\mspace{2mu}n-\ell)}
\;W_{\sgn(y)n,\,\ell+\frac12}(2\lvert y\rvert)\ .
\end{equation}
We use again the extension of $\frac1\Gamma$\,.
For $\ell,\epsilon\in\C$ with $\re(\ell)<-\frac12$\,, this is the classical
Fourier transform. Then, by analyticity, one gets the Plancherel transform
($L^2$-transform) for $\re(\ell)<-\frac14$\,, and for 
$\re(\ell)<0$
this holds in the distributional sense (Fourier-Schwartz transform; in fact
the right-hand side is integrable in that case). For $\re(\ell)\ge0$ some
modifications are necessary.

It follows that in a certain sense, the $n$-th column vector of the
matrix of \linebreak $T_\chi \Bigl(\begin{pmatrix}x & 0 \\  0 & \frac1x
\end{pmatrix} \Bigr)$ approaches $\widehat{e_{n-\epsilon}^\chi}$ for
$x\to\infty$ \,(see also Corollary\,\ref{apL2} below).
Hence, somehow the representations of $G$ (resp.\,$\widetilde G$\,)
are asymptotically recovered in their Whittaker model. But as
far as I see, this does not fit into classical
concepts like weak convergence of representations. For this reason I was
passing to ultra products and singular states of the von Neumann algebra
in \cite{L1}. A typical feature is given in the
next corollary.
\begin{Cor} \label{apfour}	  
Let $h\!:\R\to\C$ be continuous with compact support. Take
$\chi=(-\frac12+i\lambda,\epsilon)$ with $\lambda,\epsilon\in\R$ and for
$x\ge1$ put
\,$h_x=\dfrac{\sqrt2}{x^{1+2i\lambda}}
\sum\limits_{m\in\epsilon+\Z}e^{i\pi m}h\bigl(\dfrac{2m}{x^2}\bigr)
\,e_{m-\epsilon}^\chi$\,.
Then for any $g\in L^2(\R)$ we get that
\[\bigl(\:T_\chi \Bigl(\begin{pmatrix}x & 0 \\  0 & \frac1x
\end{pmatrix} \Bigr)g\mid\, h_x\,\bigr)\ \to\ (\hat g\mid h)\quad
\text{ for }\quad x\to\infty\,.
\]
\end{Cor}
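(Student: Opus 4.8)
The plan is to reduce to the case $g=e_{n-\epsilon}^\chi$, expand the inner product into a weighted sum of the coefficients $\fr P_{mn}^\ell$, replace these by their Whittaker approximations from Corollary~\ref{apl2}, and recognise the outcome as a Riemann sum converging to $(\hat g\mid h)$ via \eqref{fourbasis}. For the reduction I would first note that, writing $a_x=\bigl(\begin{smallmatrix}x&0\\0&1/x\end{smallmatrix}\bigr)$, the $m$-th coefficient of $h_x$ in the orthonormal basis $(e_{m-\epsilon}^\chi)$ has modulus $\tfrac{\sqrt2}x\lvert h(2m/x^2)\rvert$, so that $\lVert h_x\rVert_2^2=\tfrac2{x^2}\sum_{m\in\epsilon+\Z}\lvert h(2m/x^2)\rvert^2$ is a Riemann sum for $\int_\R\lvert h\rvert^2$; hence $\lVert h_x\rVert_2\to\lVert h\rVert_2$ and $\sup_{x\ge1}\lVert h_x\rVert_2<\infty$. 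Since $T_\chi(a_x)$ is unitary the functionals $g\mapsto(T_\chi(a_x)g\mid h_x)$ are uniformly bounded, and $g\mapsto(\hat g\mid h)$ is bounded; as the $e_{n-\epsilon}^\chi$ span a dense subspace, it then suffices to prove the convergence for $g=e_{n-\epsilon}^\chi$ with $n\in\epsilon+\Z$.

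For such $g$, equation \eqref{defP} with $e^\tau=x$ (so the argument is $X_x=\tfrac{x^2+x^{-2}}2$) gives
\[
(T_\chi(a_x)e_{n-\epsilon}^\chi\mid h_x)
=\frac{\sqrt2}{x^{1-2i\lambda}}\sum_{m\in\epsilon+\Z}e^{-i\pi m}\,\overline{h(2m/x^2)}\;\fr P_{mn}^\ell(X_x).
\]
I would split off $m=0$ (negligible, carrying the vanishing prefactor $x^{-1}$ and the decaying $\fr P_{0n}^\ell(X_x)$) and handle $m>0$ and $m<0$ separately, the latter through $\fr P_{mn}^\ell=\fr P_{-m\,-n}^\ell$. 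For $m>0$, Corollary~\ref{apl2} (second form, with $x$ replaced by $x^2$) shows that the $\ell^2$-distance between $(\fr P_{mn}^\ell(X_x))_{m>0}$ and $\bigl(\tfrac{(-1)^{m-n}}{m^{\ell+1}\Gamma(n-\ell)}W_{n,i\lambda}(4m/x^2)\bigr)_{m>0}$ tends to $0$; since the weights form a vector of $\ell^2$-norm $\lVert h_x\rVert_2=O(1)$, Cauchy--Schwarz lets me replace $\fr P_{mn}^\ell(X_x)$ by its Whittaker approximation up to an error tending to $0$. The phases then simplify by $e^{-i\pi m}(-1)^{m-n}=e^{-i\pi n}$ (valid for $m-n\in\Z$), which removes the oscillating sign.

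Substituting $u=2m/x^2$ (spacing $\Delta u=2/x^2$) together with $m^{-\ell-1}=(u/2)^{-1/2-i\lambda}x^{-1-2i\lambda}$, the remaining prefactors collapse to $2^{-1/2}\Delta u$, so the $m>0$ part becomes a Riemann sum for
\[
\frac{e^{-i\pi n}\,2^{i\lambda}}{\Gamma(n-\ell)}\int_0^\infty\overline{h(u)}\,u^{-1/2-i\lambda}\,W_{n,i\lambda}(2u)\,du,
\]
which by \eqref{fourbasis} (where $\ell+\tfrac12=i\lambda$, $2^{\ell+1/2}=2^{i\lambda}$, $\lvert y\rvert^{-\ell-1}=\lvert y\rvert^{-1/2-i\lambda}$) is precisely the $y>0$ contribution to $(\widehat{e_{n-\epsilon}^\chi}\mid h)$. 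The $m<0$ part produces in the same way the $y<0$ contribution, now featuring $W_{-n,i\lambda}$ and $\Gamma(-n-\ell)$, and summing the two halves yields $(\hat g\mid h)$.

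The step I expect to be most delicate is the Riemann-sum convergence near $u=0$: the integrand $u^{-1/2-i\lambda}W_{n,i\lambda}(2u)$ is only bounded, not continuous, at the origin, since $W_{n,i\lambda}(w)=O(w^{1/2})$ as $w\to0^+$, and the Whittaker approximation itself degrades for small $m$. Both effects are local: the contribution of $\{0<u\le\delta\}$ to the sum is $O(\delta)$ uniformly in $x$ (dominated by $\lVert h\rVert_\infty\,\delta\,\sup_{0<w\le2\delta}\lvert w^{-1/2}W_{n,i\lambda}(w)\rvert$), matching $\int_0^\delta$, while the finitely many small-$m$ terms vanish with the prefactor $x^{-1}$; on $\{u\ge\delta\}$ the integrand is continuous with compact support, so the Riemann sum converges in the ordinary sense. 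Letting $\delta\to0$ after $x\to\infty$ would complete the argument.
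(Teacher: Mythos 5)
Your proposal is correct and follows essentially the same route as the paper's proof: boundedness of $\{h_x\}$ in $L^2$ to reduce to $g=e_{n-\epsilon}^\chi$, replacement of $\fr P_{mn}^\ell$ by the Whittaker approximation via Corollary\,\ref{apl2} and Cauchy--Schwarz, identification with $\widehat{e_{n-\epsilon}^\chi}$ through \eqref{fourbasis}, and passage to the limit as a Riemann sum. You merely supply more detail (notably on the behaviour near $u=0$, where for $\lambda=0$ the bound is $O(\delta\log(1/\delta))$ rather than $O(\delta)$, which still suffices) than the paper, which states these steps in a single displayed chain of limits.
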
\noindent
By unitarity, this means that
$T_\chi \Bigl(\begin{pmatrix}\frac1x & 0 \\  0 & x\end{pmatrix} \Bigr)h_x
\to\hat h$ weakly in $L^2(\R)$ for $x\to\infty$\,. Similarly, one can show
that
$T_\chi \Bigl(\begin{pmatrix}x & 0 \\  0 & \frac1x\end{pmatrix} \Bigr)h_x
\to0$ weakly for $x\to\infty$\,.
\\[5mm]
$c_M$ shall denote the indicator function of a set $M$\,.\vspace{-3mm}
\begin{Cor} \label{apL2}	
For $n,\ell$ fixed as in the Theorem, $\chi=(\ell,\epsilon)$,
with $n\in\epsilon+\Z$\,, one has\vspace{-2mm}
\[\frac {(2x)^{\ell+1}}{\sqrt 2}\sum_{m\in\epsilon+\Z}
e^{-i\pi m}\,\fr P_{mn}^\ell(x)\:
c_{[\frac mx,\frac{m+1}x[}\ \to\ \widehat{e_{n-\epsilon}^\chi}\quad
\text{ for \ }x\to\infty
\]
in $L^2(\R)$ \;(with respect to $\lVert\cdot\rVert_2$).\vspace{-1mm}
\end{Cor}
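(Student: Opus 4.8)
The plan is to recognise the left-hand side as, up to an error controlled by Theorem~\ref{apcoprinc}, the left-endpoint sampling of $F:=\widehat{e_{n-\epsilon}^\chi}$ on the mesh $\{\frac mx:m\in\epsilon+\Z\}$, and then to invoke the $L^2$-convergence of such samplings. Denote the left-hand side by $\Phi_x$. By \eqref{fourbasis} (and $\ell+\frac12=i\lambda$), $F(y)=e^{-i\pi n}\frac{2^{\ell+\frac12}|y|^{-\ell-1}}{\Gamma(\sgn(y)n-\ell)}W_{\sgn(y)n,\,i\lambda}(2|y|)$. First I would record, from the $z\to0$ and $z\to\infty$ expansions of $W$, that $F$ is bounded on $\R$, uniformly continuous and exponentially decaying on $\{|y|\ge\delta\}$ for each $\delta>0$, and lies in $L^2$; near $0$ it stays bounded but need not converge (the two leading terms of $W$ contribute a bounded oscillating factor $|y|^{-2i\lambda}$).

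The conceptual core is an exact algebraic matching. For $m>0$, $m\in n+\N_0$, substitute the approximant $\frac{(-1)^{m-n}}{m^{\ell+1}\Gamma(n-\ell)}W_{n,\,i\lambda}(\frac{2m}x)$ of Theorem~\ref{apcoprinc} for $\fr P_{mn}^\ell(x)$ in the coefficient of $c_{[\frac mx,\frac{m+1}x[}$ in $\Phi_x$. Using $e^{-i\pi m}(-1)^{m-n}=e^{-i\pi n}$ and $\frac{(2x)^{\ell+1}}{\sqrt2\,m^{\ell+1}}=2^{\ell+\frac12}(\frac mx)^{-\ell-1}$, this coefficient collapses to exactly $F(\frac mx)$. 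For $m<0$, $m\in n-\N_0$, the same computation via $\fr P_{mn}^\ell=\fr P_{-m\,-n}^\ell$ and the $W_{-n,\,i\lambda}$-approximant reproduces $F(\frac mx)$ on the $\sgn(y)=-1$ branch of \eqref{fourbasis}. I then define $\Psi_x$ to equal $F(\frac mx)$ on each interval $[\frac mx,\frac{m+1}x[$ with such ``large" $m$, and $\Psi_x:=0$ on the finitely many remaining ``central" intervals (those with $|m|\le|n|$, a fixed finite set independent of $x$).

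Now $\|\Phi_x-F\|_2\le\|\Phi_x-\Psi_x\|_2+\|\Psi_x-F\|_2$, and I estimate each term on the large and central regions. On the large region the bound $|\fr P_{mn}^\ell(x)-(\cdots)|\le c\,x^{-1/2}m^{-2}$ of Theorem~\ref{apcoprinc}, together with $|(2x)^{\ell+1}|=(2x)^{1/2}$ (here $\re(\ell)=-\frac12$), gives $|\Phi_x-\Psi_x|\le c\,m^{-2}$ on the $m$-th interval, whence $\|\Phi_x-\Psi_x\|_2^2\le\sum_m\frac1x c^2m^{-4}=O(\frac1x)\to0$; and $\|\Psi_x-F\|_2\to0$ there is the $L^2$-convergence of the Riemann/sampling approximation of the bounded, uniformly continuous, exponentially decaying $F$ (split into $\delta\le|y|\le R$, where uniform continuity applies, and the tail $|y|>R$). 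On the central region, contained in $\{|y|\le\frac{|n|+1}x\}$ where $\Psi_x=0$, the two contributions are $\int|F|^2\to0$ (boundedness of $F$ times shrinking measure) and $\int|\Phi_x|^2=\sum_{\text{central}}|\fr P_{mn}^\ell(x)|^2\to0$ (each fixed coefficient decays as $x\to\infty$, the exponential decay noted in the introduction, and the index set is finite).

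The main obstacle is the origin, where three features collide: Theorem~\ref{apcoprinc} does not apply (the central indices), $F$ only oscillates instead of converging, and the prefactor $(2x)^{\ell+1}$ grows. The point to get exactly right is that, because $\re(\ell)=-\frac12$, this growth matches the interval length $\frac1x$ so that $\|\Phi_x\|_2=1$ for every $x$ (by unitarity the $n$-th column is an $l^2$-unit vector) and $\|F\|_2=1$ (Plancherel); the map carrying the column to $\Phi_x$ is thus isometric, and the shrinking central region is forced to be negligible in $L^2$, being controlled purely by its measure together with the boundedness of $F$ and the decay of the finitely many exceptional coefficients. The remaining steps are routine bookkeeping of the Whittaker asymptotics and of Riemann sums.
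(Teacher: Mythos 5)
Your proof is correct and follows essentially the same route as the paper: both reduce, via the Whittaker approximation (Theorem~\ref{apcoprinc}/Corollary~\ref{apl2}) and the identity \eqref{fourbasis} (your ``exact algebraic matching''), to the $L^2$-convergence of the left-endpoint step-function sampling of $\widehat{e_{n-\epsilon}^\chi}$, using that the prefactor $(2x)^{\ell+1}/\sqrt2$ makes the passage from the $l^2$-column to the step function isometric. One small correction: for $\lambda=0$ the function $\widehat{e_{n-\epsilon}^\chi}$ is not bounded near $0$ but only $O(\log\lvert y\rvert)$ (the paper records exactly this), which is still locally square-integrable, so your treatment of the central region survives unchanged.
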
 \noindent
A similar argument shows pointwise convergence on $\R\setminus\{0\}$\,.
Also,
\\
$\lim_{x\to\infty}\frac {x^{2\ell+2}}{\sqrt 2}\sum_{m\in\Z}
e^{-i\pi(m+\epsilon)}\,
\bigl(\:T_\chi \Bigl(\begin{pmatrix}x & 0 \\  0 & \frac1x
\end{pmatrix} \Bigr)\,g\mid\, e_m^\chi\,\bigr)\;
c_{[\frac{2m}{x^2},\frac{2(m+1)}{x^2}[}\ =\ \hat g$
\;in $L^2(\R)$ \;(with respect to $\lVert\cdot\rVert_2$) holds for
each $g\in L^2(\R)$\,.
\begin{Rems}	  \label{remprinc}	
(a) \ For $\ell$ as above with $\lambda\neq0$ and $m,n\in\R$ with $m-n\in\Z$
it follows e.g. from \cite{VK}\,p.\,396(3) (which has a slight misprint) or
\cite{DLMF}\,Eq.\,15.8.3 that
\;$\sqrt x\:\fr P_{mn}^\ell(x)-\alpha\,\cos(\lambda\log(x)+\beta)\to0$ for
$x\to\infty$ with $\alpha\in\C\,,\,\beta\in\R$ depending on $\ell,m,n$\,;
this gives the ``constant term for the Harish-Chandra expansion"
\cite{Kn}\;Th.\,14.6
\,(for $m-n\ge0$, using Pochhammer's symbol, one has \
$\alpha=\linebreak\sgn((\ell-m+1)_{m-n})\:
\frac{2\,\lvert\cos(\pi(\epsilon-i\lambda))\rvert}
{\sqrt{\pi\lambda\sinh(2\pi\lambda)}}$\,; for $m=n=0$
[zonal spherical functions] this can also be expressed using Harish-Chandra's
$c$-function \cite{Kn}\,p.\,279, compare \cite{VK}\,p.\,402(6)\;).
See also Remark\;\ref{remdisc} for refinements and a comparison.
\\
Thus (for $\ell\neq-\frac12$)
the individual entries
of the matrix $(\fr P_{mn}^\ell(x))$ tend to $0$ with speed
$\frac1{\sqrt x}$ \,for $x\to\infty$\,; see also the proof for the exceptions
in section\,\ref{proofprinc} and Remark\,\ref{remdisc}\,(a) for the case
$\ell=-\frac12$\,. But by unitarity, the row vectors must have length 1
(in $l^2(\epsilon+\Z)$\,) for each $x$\,. Informally speaking, for fixed $n$
the  ``main content" of the $n$-the column moves to $\pm\infty$
proportionally to~$\pm x$
when $x\to\infty$ and there is a ``limiting distribution" (in $l^2$-sense)
which is closely related to $\widehat{e_{n-\epsilon}^\chi}$\,.
\\[2mm]
(b) \ For arbitrary $\ell\in\C$ put
\,$\ell_\R=\re(\ell)\,,\;\ell_1=\frac12-\lvert\ell_\R+\frac12\rvert$.
Then for $n\in\R$ fixed, the same type of argument gives an estimate
\;$\fr P_{mn}^\ell(x)\,=\,
\dfrac{(-1)^{m-n}}{m^{\ell+1}\Gamma(n-\ell)}\,
W_{n,\,\ell+\frac12}\Bigl(\dfrac{2\,m}x\Bigr)+
O(\frac1{m^{\ell_\R+3}}\,(\frac mx)^{\ell_1})$\vspace{1mm}
uniformly for all $x\ge1$ and all 
$m\in n+\N_0$ with $m>\max(0,-\ell_1)$.
\end{Rems}
\vskip 2mm
Corresponding results hold for the representations of the discrete
series, see section\,\ref{disc} for definitions and notation.
\vspace{-3mm}
\begin{Thm} \label{apcodisc} 
For \,$\ell,n\in\R$ fixed with $\ell\le-\frac12\,,\ n\in-\ell+\N_0$\,,
\begin{multline*}
\sup\,\biggl\{\ \Bigl\lvert\,\Cal P_{mn}^\ell(x)\,-\,
\frac{(-1)^{m-n}}{\bigl(m\,(\ell+n)!\,\Gamma(n-\ell)\bigr)^{\frac12}}\,
W_{n,\,\ell+\frac12}\Bigl(\frac{2\,m}x\Bigr)\,\Bigr\rvert\
x^{\textstyle\frac12}\,m^2\ :\\[-2mm]
x\ge1,\,m\in n+\N_0\,\biggr\}\quad
\text{is finite.}
\end{multline*}
\end{Thm}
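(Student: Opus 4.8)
The plan is to mirror the proof of Theorem~\ref{apcoprinc}, reducing the discrete-series coefficient $\Cal P_{mn}^\ell$ to the principal-series expression $\fr P_{mn}^\ell$ now evaluated at the \emph{real} parameter $\ell\le-\frac12$. By the formulas recalled in section~\ref{disc}, the unitary structure of the discrete series merely rescales the elliptic basis, so that $\Cal P_{mn}^\ell=(N_m/N_n)\,\fr P_{mn}^\ell$ with an explicit normalization of the form $N_m=\bigl(\Gamma(m+\ell+1)/\Gamma(m-\ell)\bigr)^{1/2}$ (and $N_n=\bigl((\ell+n)!/\Gamma(n-\ell)\bigr)^{1/2}$, using $n+\ell\in\N_0$). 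First I would record this identity together with the degenerate case $\ell=-\frac12$ (where $\mu=\ell+\frac12=0$ forces the logarithmic Whittaker asymptotics and overlaps with the exceptional value $\chi=(-\frac12,\frac12)$ appearing after Theorem~\ref{apcoprinc}).

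Next I would feed the confluent-limit estimate underlying Theorem~\ref{apcoprinc}, namely the integral representation~\eqref{averwhit} of $\hyp$ by Whittaker functions, into $\fr P_{mn}^\ell$ with real $\ell$, giving $\fr P_{mn}^\ell(x)=\frac{(-1)^{m-n}}{m^{\ell+1}\Gamma(n-\ell)}\,W_{n,\ell+\frac12}(2m/x)+E_{mn}(x)$. The amplitude is then converted by the exact gamma ratio: because $(\ell+1)+(-\ell)-1=0$, the first-order correction in $\Gamma(m+\ell+1)/\Gamma(m-\ell)=m^{2\ell+1}\bigl(1+O(m^{-2})\bigr)$ vanishes, whence $\frac{N_m}{N_n}\cdot\frac{1}{m^{\ell+1}\Gamma(n-\ell)}=\frac{1}{(m\,(\ell+n)!\,\Gamma(n-\ell))^{1/2}}\bigl(1+O(m^{-2})\bigr)$. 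Thus $(N_m/N_n)\,\fr P_{mn}^\ell$ differs from the claimed approximant only by $O(m^{-2})$ times the leading Whittaker term, plus the transported genuine error $(N_m/N_n)E_{mn}$.

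The main obstacle is the uniform control of these two contributions over the whole range $x\ge1$, $m\in n+\N_0$, and especially in the regime $x\to\infty$, that is $z:=2m/x\to0$. The decisive structural fact is the hypothesis $n\in-\ell+\N_0$: it makes $\Gamma(\tfrac12-\mu-n)=\Gamma(-\ell-n)$ a pole, so the coefficient of the $z^{\ell+1}$-term in the small-$z$ expansion of $W_{n,\ell+\frac12}(z)$ vanishes and the Whittaker function (hence $\Cal P_{mn}^\ell$ itself) decays like $z^{-\ell}$, not $z^{\ell+1}$, as $z\to0$. With this, the amplitude correction $O(m^{-2})\cdot m^{-1/2}\,W_{n,\ell+\frac12}(2m/x)$, multiplied by $x^{1/2}m^2$, is comparable to $z^{-\ell-1/2}$, which stays bounded precisely because $-\ell-\tfrac12\ge0$ (and the exponential decay of $W$ controls the large-$z$ end).

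The genuine error $E_{mn}$ is where the work concentrates. The generic bound of Remark~\ref{remprinc}(b), stated for arbitrary $n\in\R$, carries a small-$z$ factor $(m/x)^{\ell+1}\sim z^{\ell+1}$ that is too weak near $z\to0$ for $\ell<-\frac12$; so I would re-derive the error directly from~\eqref{averwhit}, exploiting $n\in-\ell+\N_0$ (where the relevant $\hyp$ and its Whittaker transform are governed by the balanced parameters) to show that $E_{mn}$ inherits the same $z^{-\ell}$ decay and is of relative order $O(m^{-2})$ against the leading term, uniformly in $x$. Assembling the amplitude match with this sharpened, $n$-specific error estimate then yields $\bigl\lvert\Cal P_{mn}^\ell(x)-\frac{(-1)^{m-n}}{(m\,(\ell+n)!\,\Gamma(n-\ell))^{1/2}}W_{n,\ell+\frac12}(2m/x)\bigr\rvert\le c\,x^{-1/2}m^{-2}$; the boundary case $\ell=-\frac12$ is handled by the same confluent-limit argument with the logarithmic Whittaker asymptotics, exactly as for the exceptional principal-series coefficients.
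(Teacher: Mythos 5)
Your proposal follows essentially the same route as the paper's proof: convert $\Cal P_{mn}^\ell$ to $\fr P_{mn}^\ell$ via the normalization in \eqref{defPd+}, match amplitudes through the gamma-ratio asymptotics $\Gamma(m+\ell+1)/\Gamma(m-\ell)=m^{2\ell+1}(1+O(m^{-2}))$ (with the vanishing first-order term), and exploit the Laguerre degeneration of $W_{n,\ell+\frac12}$ forced by $\ell+n\in\N_0$ to obtain the bound $O(t^{-\ell})=O(t^{1/2})$ needed to run Lemma~\ref{intasym} with $\alpha=\frac12$, exactly as the paper does. One minor point: for $\ell=-\frac12$ the hypothesis $n\in\frac12+\N_0$ already places you in the non-logarithmic, polynomially degenerate case (the would-be logarithmic coefficient $1/\Gamma(\frac12-n)$ vanishes), so no separate treatment in the style of the principal-series exceptions is needed there.
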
\noindent
By \eqref{defPd-}, one has
$\Cal P_{mn}^\ell=\Cal P_{-m\,-n}^\ell$\,. Hence there is a corresponding
approximation in the case $\ell\ge n\ge m$\,.
Similarly for the complementary series and the uniformly bounded
representations (see\,\ref{di42}).\vspace{1mm}
\begin{varrem}\label{di14}		
{\bf Whittaker functions, hypergeometric function,\newline
incomplete gamma function}
\\
The Whittaker functions are defined by
\[
W_{\nu\!,\rho}(z)=
\frac{z^{\rho+\frac12}\,e^{-\frac z2}}{\Gamma(\rho-\nu+\frac12)}\,
\int\limits_0^\infty e^{-zu}\,u ^{\rho-\nu-\frac12}\,
(1+u)^{\rho+\nu-\frac12}\,du
\]
for \,$\re z>0$, $\re(\rho-\nu+\frac12)>0$. By analytic continuation
one gets extensions for all $\nu,\rho\in\C$ and for $z$ in the cut plane
$\C\setminus]-\infty,0]$\,. They are analytic in $\nu,\rho,z$\,. See
\cite{VK}\,5.2 for further properties.
\\
The (Gaussian) hypergeometric function will be denoted by $\hyp$\,. See
\cite{VK}\,3.5.3 for definition and some basic properties. It will be
used to express $\fr P_{mn}^\ell$\,. By \cite{VK}\,p.\,313\,(2)
\[\fr P_{mn}^\ell(x)=\binom{\ell-n}{m-n}
\Bigl(\frac{x-1}2\Bigr)^{\!\frac{m-n}2}
\Bigl(\frac{x+1}2\Bigr)^{\!-\frac{m+n}2}\,
\hyp\bigl(-\ell-n,\ell+1-n;m-n+1;\frac{1-x}2\bigr)
\]
holds for \,$m-n\in\N_0\,,\,x\ge1$ \,(and this would provide also an analytic
extension of $\fr P_{mn}^\ell(x)$ to a cut plane $\C\setminus]-\infty,1[$\,).
Using again that
$\fr P_{nm}^\ell=\fr P_{\!-n\,-m}^\ell$ (see above) and combining with
\cite{DLMF}\,Eq.\,13.23.4 it follows that
\begin{multline}    \label{averwhit} 
\fr P_{nm}^\ell(x)=\frac1{\Gamma(\ell+n+1)}\,
\frac{\Gamma(m)}{\Gamma(m-\ell)}
\Bigl(1-\frac1{x^2}\Bigr)^{\!\frac m2}\Bigl(\frac{x+1}{x-1}\Bigr)^{\!\frac n2}
\cdot
\\
\frac1{\Gamma(m)}
\int_0^\infty e^{-t}\,t^{m-1}\,W_{\!n,\,\ell+\frac12}(\tfrac{2t}x)\,dt
\end{multline}
for all \,$x,\ell,m,n\in\C$ such that
\,$\re(x)>1\,,\,m-n\in\N_0$ and
$\re(m)+\frac12>\lvert\re(\ell+\frac12)\rvert$\linebreak
\,(in fact, the substitution of $\hyp$ would be possible for $\re(x)>0\,,\,
\re(m)+\frac12>\lvert\re(\ell+\frac12)\rvert$ -- to ensure convergence of the
integral; for $m=0$ one has to cancel $\Gamma(m)$\,). The second part of this
expression is (restricting now to $m\in\R\,,\linebreak
m>0$) an average of
$W_{\!n,\,\ell+\frac12}$ with respect to a gamma (probability)
distribution. To get estimates, we use results on the asymptotics of
incomplete gamma functions. For $a,x>0$ (real) put
$P(a,x)=\frac1{\Gamma(a)}\,\int_0^x t^{a-1}e^{-t}\,dt\,,\
Q(a,x)=1-P(a,x)=\frac1{\Gamma(a)}\,\int_x^\infty t^{a-1}e^{-t}\,dt$\,.
In \cite{Te}\,Temme gives an algorithm to compute the coefficients for
asymptotic expansions of $P,Q$ and derives uniform bounds for the remainders.
\end{varrem}
\begin{Lem} \label{incgam} 
Put $\lambda=\dfrac xa$\,. Then\vspace{-1mm}
\[
P(a,x)=\frac{(\lambda\,e^{1-\lambda})^a}{\sqrt{2\pi a}}\Bigl(\frac1{1-\lambda}
+O\bigl(\frac1{(1-\lambda)^3\,a}\bigr)\,\Bigr)\quad\text{holds uniformly for }
0<x<a\,,
\]\vspace{-2mm}
\[
Q(a,x)=\frac{(\lambda\,e^{1-\lambda})^a}{\sqrt{2\pi a}}\Bigl(\frac1{\lambda-1}
+O\bigl(\frac1{(\lambda-1)^3\,a}+\frac1a\bigr)\,\Bigr)\quad\text{holds
uniformly for }0<a<x\,.\vspace{1mm}
\]
\end{Lem}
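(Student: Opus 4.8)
The plan is to deduce both asymptotic formulas from the uniform expansion of the incomplete gamma functions in terms of the complementary error function, for which the uniform remainder bounds of Temme \cite{Te} are available. First I introduce the standard turning--point variable $\eta$ by
\[
\tfrac12\,\eta^2=\lambda-1-\log\lambda,\qquad \sgn(\eta)=\sgn(\lambda-1),
\]
a real--analytic, strictly increasing function of $\lambda>0$ that vanishes precisely at $\lambda=1$. A one--line computation gives the identity $e^{-\frac12 a\eta^2}=(\lambda\,e^{1-\lambda})^a$, which already accounts for the exponential prefactor in the statement. Temme's expansion then reads
\[
Q(a,x)=\tfrac12\,\operatorname{erfc}\!\Bigl(\eta\sqrt{\tfrac a2}\Bigr)
+\frac{e^{-\frac12 a\eta^2}}{\sqrt{2\pi a}}\,\Bigl(C_0(\eta)+\frac{\theta}{a}\Bigr),
\qquad C_0(\eta)=\frac1{\lambda-1}-\frac1\eta ,
\]
where $\theta=\theta(a,\eta)$ stays bounded uniformly in the range considered. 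Since $P=1-Q$ and $\operatorname{erfc}(-w)=2-\operatorname{erfc}(w)$, the range $0<\lambda<1$ follows from $\lambda>1$ by the reflection $\eta\mapsto-\eta$, so I would treat only $Q$.

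Setting $w=\eta\sqrt{a/2}$, so that $e^{-\frac12 a\eta^2}=e^{-w^2}$, the contribution $\frac{e^{-\frac12 a\eta^2}}{\sqrt{2\pi a}}\cdot\frac1\eta$ of the second summand of $C_0$ equals exactly $\frac{e^{-w^2}}{2\sqrt\pi\,w}$. Thus the announced leading term comes from the first summand $\frac1{\lambda-1}$ of $C_0$, and everything else is collected into
\[
D(w):=\tfrac12\,\operatorname{erfc}(w)-\frac{e^{-w^2}}{2\sqrt\pi\,w}
\]
together with the $\theta/a$ term. The key analytic input is the elementary uniform bound $\lvert D(w)\rvert\le C\,e^{-w^2}/w^3$ for all $w>0$; it follows from the classical inequality $\operatorname{erfc}(w)<e^{-w^2}/(\sqrt\pi\,w)$ (which fixes the sign of $D$) and the behaviour $D(w)\sim-e^{-w^2}/(4\sqrt\pi\,w^3)$. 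Re--expressed in $\eta$ this reads $\bigl\lvert\sqrt{2\pi a}\,e^{w^2}D(w)\bigr\rvert\le C'/(\eta^3 a)$, and finally $\frac1{\eta^3 a}=O\bigl(\frac1{(\lambda-1)^3 a}+\frac1a\bigr)$: near $\lambda=1$ one has $\eta=(\lambda-1)\bigl(1+O(\lambda-1)\bigr)$, so $\eta^{-3}\asymp(\lambda-1)^{-3}$, whereas for $\lambda$ bounded away from $1$ (in particular $\lambda\to\infty$) $\eta$ is bounded below and only the $O(1/a)$ term survives. This yields the estimate for $Q$; for $P$ the parameter stays in $0<\lambda<1$, where $(1-\lambda)^{-3}\ge1$ absorbs the $O(1/a)$ term and leaves the cleaner bound $O\bigl(\frac1{(1-\lambda)^3 a}\bigr)$.

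The main obstacle is to make all of this uniform through the turning point $\lambda=1$, where $\eta\to0$ and both $C_0(\eta)$ and the error--function term are individually singular while the announced leading term $\frac1{\sqrt{2\pi a}\,(\lambda-1)}$ blows up. The whole point is that the $1/\eta$ singularity of $C_0$ must cancel against that of $\tfrac12\operatorname{erfc}(\eta\sqrt{a/2})$, and it is precisely the uniform bound on $D(w)$ down to $w=0$ that controls this cancellation: for small $w$ the bound is far from sharp (the true size of $D$ is $\sim(2\sqrt\pi\,w)^{-1}$), but it remains valid and feeds into the correct error order, since there $\eta^2 a=2w^2$ is small and $\frac1{\eta^3 a}\ge\frac1\eta$. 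Establishing this single real--variable estimate, and verifying that Temme's remainder $\theta$ is indeed bounded uniformly up to $\lambda=1$ and out to $\lambda\to\infty$, are the only genuinely technical points; the rest is the bookkeeping indicated above.
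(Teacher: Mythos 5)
Your argument is correct and takes essentially the same route as the paper's proof: Temme's uniform expansion of $P,Q$ in terms of $\operatorname{erfc}(\eta\sqrt{a/2})$ with first coefficient $C_0(\eta)=\frac1{\lambda-1}-\frac1\eta$, the first-term asymptotics of $\operatorname{erfc}$ with remainder bounded by the first neglected term (DLMF Eq.\,7.12.1), and Temme's explicit remainder formulas for the uniform control of the $O(1/a)$ term. The additional bookkeeping you supply (the global bound on $D(w)$ and the comparison of $1/\eta^3$ with $1/(\lambda-1)^3$ plus a constant) is precisely what the paper leaves implicit.
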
\noindent
Presumably, one may take $2.1$ as a constant in the $O$-estimate.
\begin{proof}
This is the first term ($k=0$) of the expansion \cite{Te}\,(1.5) combined
with the first term in \cite{DLMF}\,Eq.\,7.12.1 of the expansion for the
complementary error function. The uniform error bound follows from the
formulas for $Q_1,P_1$ in \cite{Te}\,(2.14),(2.16).
\end{proof}
\section{Proof of theorem\,\ref{apcoprinc} and corollaries}
\label{proofprinc}		
\begin{Lem}	 \label{intrem} 
Let $\alpha\ge0$ (real) be fixed and assume that \,$f\!:\,]0,\infty[\to\C$ is
a measurable function such that $f(t)\,t^{-\alpha}$ is bounded on
$]0,\infty[$. Then for any fixed $\beta>0$ we have
\[
\Bigl\lvert\,\int_0^{\frac m2}e^{-t}\,t^{m-1}f(y\,t)\,dt\,\Bigr\rvert+
\Bigl\lvert\,\int_{2m}^\infty e^{-t}\,t^{m-1}f(y\,t)\,dt\,\Bigr\rvert=
O(\,y^\alpha (m+1)^{-\beta}\,\Gamma(m+\alpha)\,)
\]
uniformly for $m,y>0$.
\end{Lem}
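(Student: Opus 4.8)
The plan is to dominate $f$ crudely and then recognise the two integrals as incomplete gamma integrals, to which Lemma \ref{incgam} applies. Writing $C=\sup_{t>0}\lvert f(t)\rvert\,t^{-\alpha}$ (finite by hypothesis), one has $\lvert f(yt)\rvert\le C\,y^\alpha\,t^\alpha$ for all $t,y>0$. Substituting this bound and setting $a=m+\alpha$, the elementary identities $\int_0^x e^{-t}t^{a-1}\,dt=\Gamma(a)\,P(a,x)$ and $\int_x^\infty e^{-t}t^{a-1}\,dt=\Gamma(a)\,Q(a,x)$ give
\[
\Bigl\lvert\int_0^{m/2}e^{-t}t^{m-1}f(yt)\,dt\Bigr\rvert\le C\,y^\alpha\,\Gamma(m+\alpha)\,P\bigl(m+\alpha,\tfrac m2\bigr),
\]
and likewise the tail integral is at most $C\,y^\alpha\,\Gamma(m+\alpha)\,Q(m+\alpha,2m)$. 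Since the factor $y^\alpha\,\Gamma(m+\alpha)$ is exactly what appears in the target $O$-term, the whole statement reduces to proving that $P(m+\alpha,m/2)$ and $Q(m+\alpha,2m)$ are $O\bigl((m+1)^{-\beta}\bigr)$ uniformly in $m>0$.

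To establish this I would split the range of $m$. Fix $M_0=\max(1,3\alpha)$. For $0<m\le M_0$ the trivial bounds $P,Q\le1$ combined with $(m+1)^{-\beta}\ge(M_0+1)^{-\beta}$ give the estimate at once. For $m>M_0$ I invoke Lemma \ref{incgam}. In the $P$-term, $\lambda=(m/2)/(m+\alpha)\le\tfrac12$ because $\alpha\ge0$; since $\mu\mapsto\mu e^{1-\mu}$ increases on $[0,1]$, the leading factor satisfies $(\lambda e^{1-\lambda})^{m+\alpha}\le\rho_1^{\,m+\alpha}$ with $\rho_1=\tfrac12 e^{1/2}<1$, while $1-\lambda\ge\tfrac12$ keeps the bracketed factor (including the error term $O(1/((1-\lambda)^3 a))$, now $O(1/m)$) bounded. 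In the $Q$-term, $\lambda=2m/(m+\alpha)$ is increasing in $m$ and the choice $M_0\ge3\alpha$ forces $\tfrac32\le\lambda\le2$; as $\mu\mapsto\mu e^{1-\mu}$ decreases on $[1,\infty)$, one gets $(\lambda e^{1-\lambda})^{m+\alpha}\le\rho_2^{\,m+\alpha}$ with $\rho_2=\tfrac32 e^{-1/2}<1$, and $\lambda-1\ge\tfrac12$ again controls the error term. In both cases the exponential factor dominates the algebraic prefactor $(2\pi(m+\alpha))^{-1/2}$ and decays faster than any power $(m+1)^{-\beta}$, which finishes the reduction.

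The genuine content is thus entirely contained in Lemma \ref{incgam}; the only thing to watch is uniformity. The two points that need care are, first, that $\lambda$ stays bounded away from $1$ (from below for $P$, from above for $Q$) uniformly over the relevant range of $m$, which is guaranteed by $\alpha\ge0$ and the threshold $M_0\ge3\alpha$, so that the exponential gain $\rho_i^{\,m}$ is genuinely uniform; and second, that the error terms in Lemma \ref{incgam}, which carry a $1/a$, remain bounded, for which one needs $a=m+\alpha$ bounded below --- exactly what the separate treatment of $0<m\le M_0$ secures. No delicate cancellation is involved; the main obstacle is simply organising these uniformity bookkeeping details.
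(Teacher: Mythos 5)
Your proof is correct and follows essentially the same route as the paper: bound $\lvert f(yt)\rvert\le Cy^\alpha t^\alpha$, recognise the integrals as $\Gamma(m+\alpha)P(m+\alpha,\frac m2)$ and $\Gamma(m+\alpha)Q(m+\alpha,2m)$, and apply Lemma~\ref{incgam} using $\lambda\le\frac12$ (resp.\ $\lambda$ bounded away from $1$ from above). The paper leaves the uniformity bookkeeping as ``follows easily''; your split at $M_0=\max(1,3\alpha)$ and the monotonicity of $\mu\mapsto\mu e^{1-\mu}$ supply exactly the missing details.
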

\begin{proof}
Assume that $\lvert f(t)\rvert\le c\,t^\alpha$ for all $t>0$\,. Then
$\lvert\,\int_0^{\frac m2}e^{-t}\,t^{m-1}f(y\,t)\,dt\,\rvert\le
c\,y^\alpha\,P(m+\alpha,\frac m2)\,\Gamma(m+\alpha)$. 
Using that $\lambda=\frac m{2(m+\alpha)}\le\frac12$\,, the estimate
$P(m+\alpha,\frac m2)=O((m+1)^{-\beta})$  follows easily from
Lemma\,\ref{incgam}.
Similarly for $\int_{2m}^\infty e^{-t}\,t^{m-1}f(y\,t)\,dt$\,.\vspace{-1mm}
\end{proof}\vspace{-4mm}
\begin{Rem}	  \label{remintrem}	     
Going a little further with similar arguments, one can show a slightly
extended version of Lemma\,\ref{intrem}. Let $\gamma,c'>0$\,. Restricting
to $0<y\le c'$ it is enough to assume that $f(t)\,t^{-\alpha}$ is bounded on
$]0,1]$ and $f(t)\,t^{-\gamma}$ is bounded on $[1,\infty[$\,.
For $y\,m\ge1$ the conclusion of the lemma still holds when $\alpha$ is
replaced by some $\alpha'\ge\alpha$  (the bounding constant may change).
\end{Rem}
\begin{Lem}	 \label{intasym} 
Let $\alpha\ge0\,,\,m_0>0$ (real) be fixed and assume that \,$f\!:\,]0,\infty[\to\C$
is\linebreak
$4$-times differentiable such that $f^{(k)}(t)\;t^{k-\alpha}$ is bounded on
$]0,\infty[$ for $k=0,\cdots,4$. \vspace{-2mm}Then
\begin{multline*}
\sup\,\biggl\{\ \Bigl\lvert\,\int_0^\infty e^{-t}\,t^{m-1}f(y\,t)\,dt\,-\,
\Gamma(m)\,\Bigl(f(ym)+\frac{y^2m}2\,f''(ym)\Bigr)\,\Bigr\rvert\;
\frac{m^2}{y^\alpha\,\Gamma(m+\alpha)}\;:\\
y>0\,,m\ge m_0\,\biggr\}\qquad
\text{is finite.}
\end{multline*}
\end{Lem}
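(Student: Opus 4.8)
The plan is to read $\frac1{\Gamma(m)}\int_0^\infty e^{-t}t^{m-1}g(t)\,dt$ as the expectation of $g$ against a gamma law of shape $m$ and scale $1$, whose mean is $m$ and whose central moments $\mu_k=\frac1{\Gamma(m)}\int_0^\infty e^{-t}t^{m-1}(t-m)^k\,dt$ satisfy $\mu_1=0,\ \mu_2=m,\ \mu_3=2m,\ \mu_4=3m^2+6m$ (computed from the cumulants $\kappa_k=(k-1)!\,m$). First I would Taylor-expand $f(yt)$ about $yt=ym$ to third order and integrate term by term. By the moment values the constant term reproduces $\Gamma(m)f(ym)$, the linear term vanishes, the quadratic term gives $\Gamma(m)\,\frac{y^2m}2 f''(ym)$, while the cubic term produces an extra piece $\Gamma(m)\,\frac{y^3m}3 f'''(ym)$ that is \emph{not} part of the claimed approximation. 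The crucial bookkeeping point is that this leftover already has the asserted error size: since $|f'''(ym)|\le C(ym)^{\alpha-3}$ by hypothesis, $\Gamma(m)\,\frac{y^3m}3|f'''(ym)|=O(y^\alpha m^{\alpha-2}\Gamma(m))$, and as $\Gamma(m+\alpha)\asymp m^\alpha\Gamma(m)$ uniformly for $m\ge m_0$, this equals $O(y^\alpha m^{-2}\Gamma(m+\alpha))$.

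It then remains to bound $\int_0^\infty e^{-t}t^{m-1}E(t)\,dt$, where $E(t)=f(yt)-P(t)$ and $P$ is the cubic Taylor polynomial of $f(y\,\cdot)$ at $m$. I would split the integral at $t=m/2$ and $t=2m$. On the central band $[m/2,2m]$, Taylor's theorem with Lagrange remainder gives $E(t)=\frac{y^4}{24}f^{(4)}(\xi_t)(t-m)^4$ with $\xi_t$ between $ym$ and $yt$; there $\xi_t\asymp ym$, so $|f^{(4)}(\xi_t)|\le C(ym)^{\alpha-4}$, and integrating against $e^{-t}t^{m-1}$ with $\mu_4=O(m^2)$ yields a contribution $O\bigl(y^4(ym)^{\alpha-4}\Gamma(m)m^2\bigr)=O(y^\alpha m^{\alpha-2}\Gamma(m))$, again of the required size.

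On the two tails I would \emph{not} use the Lagrange form (there $\xi_t$ may approach $0$ or $\infty$, so $f^{(4)}(\xi_t)$ is uncontrolled), but instead estimate the two pieces of $E(t)=f(yt)-P(t)$ separately. For the $f(yt)$ piece, Lemma\;\ref{intrem} with $\beta\ge2$ gives $O(y^\alpha m^{-\beta}\Gamma(m+\alpha))$ directly. For each monomial $f^{(k)}(ym)\,y^k(t-m)^k$ of $P$ I would combine $|f^{(k)}(ym)\,y^k|\le Cy^\alpha m^{\alpha-k}$ with the exponential concentration of the gamma law: expanding $(t-m)^k$ and applying Lemma\;\ref{incgam} shows that $\int_0^{m/2}$ and $\int_{2m}^\infty$ of $e^{-t}t^{m+j-1}\,dt$ equal $\Gamma(m+j)$ times $P(m+j,\tfrac m2)$ resp. $Q(m+j,2m)$, which decay like $(\tfrac12 e^{1/2})^m$ resp. $(2e^{-1})^m$, hence faster than any power of $m$. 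Thus the tail contributions are $o(y^\alpha m^{-N}\Gamma(m))$ for every $N$ and are negligible.

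Collecting the central remainder, the leftover cubic term, and the tails, all three are $O(y^\alpha m^{\alpha-2}\Gamma(m))=O(y^\alpha m^{-2}\Gamma(m+\alpha))$ uniformly in $y>0$ and $m\ge m_0$, which is exactly the finiteness of the stated supremum. I expect the main obstacle to be the uniformity in $y$: one must verify that the hypotheses ``$f^{(k)}(t)\,t^{k-\alpha}$ bounded'' furnish $|f^{(k)}(ym)|\le C(ym)^{\alpha-k}$ and $|f^{(4)}(\xi_t)|\le C\xi_t^{\alpha-4}$ for all $y$ simultaneously, and that the split at $m/2,2m$ keeps $\xi_t$ comparable to $ym$ on the central band regardless of $y$, so that the gamma-moment scale $m$ and the derivative-decay scale $ym$ combine into the single factor $y^\alpha m^{\alpha-2}$.
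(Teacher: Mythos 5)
Your proposal is correct and follows essentially the same route as the paper's proof: Taylor expansion of $f(y\,\cdot)$ to fourth order around $ym$, the gamma central moments $1,0,m,2m,3m(m+2)$, the split at $m/2$ and $2m$ with Lemma~\ref{intrem} handling the tails, and the final conversion $m^\alpha\Gamma(m)\asymp\Gamma(m+\alpha)$ for $m\ge m_0$. The only (immaterial) difference is bookkeeping: you integrate the cubic Taylor polynomial exactly over $]0,\infty[$ and then discard its tails, whereas the paper truncates to $[m/2,2m]$ first and approximates the truncated moments by the full ones; both versions control the leftover $f'''$ term and the Lagrange remainder by the same bound $O(y^\alpha m^{\alpha-2}\Gamma(m))$.
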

\begin{proof}
By Lemma\,\ref{intrem}, it will be enough to estimate
$\int_{\frac m2}^{2m} e^{-t}\,t^{m-1}f(y\,t)\,dt$\,. We use Taylor expansion
of $f$ around $t_0=y\,m$\,: \
$f(y\,t)=f(y\,m)+y\,(t-m)\,f'(y\,m)+\linebreak
\cdots+\frac16\,(y\,(t-m))^3f'''(y\,m)+(y\,(t-m))^4\,r(t)$
\,with \;$r(t)=\frac1{24}\,f^{\text{\sc iv}}(\xi)$,
\,where\linebreak
$\frac{y\,m}2<\xi<2\,y\,m$\,, hence $r(t)=O((y\,m)^{\alpha-4})$. The
central moments
$\gamma_k=\linebreak
\dfrac1{\Gamma(m)}\int_0^\infty e^{-t}\,t^{m-1}(t-m)^k\,dt$ \,are
\,$1,0,m,2m,3m(m+2)$ \,for $k=0,\cdots,4$. By Lemma\,\ref{intrem}
(taking $\beta=2+k$), we have
$\int_0^{\frac m2} e^{-t}\,t^{k+m-1}\,dt+
\int_{2m}^\infty\ \cdots\ =\linebreak
O((m+1)^{-2-k}\Gamma(m+k))=O(m^{-2}\Gamma(m))$, hence
$\bigl\lvert\Gamma(m)\,\gamma_k-
\int_{\frac m2}^{2m} e^{-t}\,t^{m-1}(t-m)^k\,dt\bigr\rvert=
O(m^{-2}\Gamma(m))$. Integrating now separately the
summands of the Taylor expansion, the approximation by $\gamma_k$ gives
for $f',f'''\text{ and }r$ terms that are $O(y^\alpha m^{\alpha-2}\Gamma(m))$
and for $k=0,\cdots,4$ the approximation error contributes a remainder of
\vspace{.5mm}this size.
Recall that
\;$\sup\bigl\{\dfrac{m^\alpha\Gamma(m)}{\Gamma(m+\alpha)}:\,
m\ge m_0\bigr\}$
is finite (\cite{O}\,(5.02)\,p.\,119), leading to our claim.\vspace{-1mm}
\end{proof}
\begin{proof}[\bf Proof of Theorem\,\ref{apcoprinc}\ --\ general case]\ \\
We treat here the cases where Lemma\,\ref{intasym} can be applied directly
to $f(t)=W_{\!n,\,\ell+\frac12}(t)$ with $\alpha=\frac12$\,. Recall the
asymptotic behaviour of the Whittaker functions. We have
$W_{\nu\!,\rho}(t)\sim e^{-\frac t2}\,t^\nu$ for $t\to\infty$
(\cite{DLMF}\,Eq.\,13.14.21), hence for $t\in[1,\infty[$ the boundedness
condition holds for all $\nu,\rho,\alpha$\,.
Furthermore,\vspace{.5mm} for $0<t\le1$,
\,$W_{\nu\!,\rho}(t)=
\Bigl(\dfrac{\Gamma(-2\rho)}{\Gamma(-\nu-\rho+\frac12)}\:t^{\rho+\frac12}+
\dfrac{\Gamma(2\rho)}{\Gamma(-\nu+\rho+\frac12)}\:t^{-\rho+\frac12}\Bigr)\,
(1+O(t))$\vspace{.5mm} when $2\rho\notin\Z$\,, by \cite{VK}\,p.\,221\,(2) and
\cite{DLMF}\,Eq.\,13.14.14. In the notation of Theorem\,1, this gives
$W_{\!n,\,\ell+\frac12}(t)=O(\sqrt t)$ for $t\to0+$\,, whenever
$\lambda\neq0$\, and \,$n,\lambda\in\R$\,. In addition, by
\cite{DLMF}\,Eq.\,13.14.15, $W_{\!n,\,0}(t)=O(\sqrt t)$ for $t\to0+$\, holds
for $n=\frac12\,,\,\frac32\,,\cdots$. For the derivatives, we use
\cite{VK}\,p.\,218\,(3),
$W'_{\nu\!,\rho}(t)=
-\frac1t\,W_{\nu+1\!,\rho}(t)-(\frac\nu t-\frac12)\,W_{\nu\!,\rho}(t)$ and
it follows that in the cases above, $f(t)=W_{\!n,\,\ell+\frac12}(t)$ satisfies
$f^{(k)}(t)=O(t^{\frac12-k})$ for $k=0\,,1\,,\cdots$.
Furthermore, there are the trivial cases
$\ell=-\frac12\,,\,n=-\frac12\,,\,-\frac32\,,\cdots$
\,where, according to our convention
on $\frac1{\Gamma(t)}$\,, everything becomes $0$\, (see also
section\,\ref{disc}\,).
Thus the exceptions
in this part of the proof are $(\ell,n)$ where \,$\ell=-\frac12$
\,and \,$n\notin\frac12+\Z$\,. For the remaining part of the proof, we can
assume that in the case $\ell=-\frac12$ \,we have
\,$n\in\{\frac12\,,\,\frac32\,,\cdots\}$.
\\[1mm plus.5mm]
The formulas simplify slightly when working with $\fr P_{nm}$ instead of
$\fr P_{mn}$\,. By \cite{VK}\linebreak p.\,319\,(6),
$\fr P_{mn}^\ell=(-1)^{m-n}\,\fr P_{nm}^{-\ell-1}$.
Hence it will be enough to show boundedness of the expression
\begin{equation}\label{th1mod}
\biggl(\,\fr P_{nm}^{-\ell-1}(x)\,-\,\dfrac1{m^{\ell+1}\Gamma(n-\ell)}\,
W_{n\!,\,i\lambda}\Bigl(\dfrac{2\,m}x\Bigr)\,\biggr)\;x^{\frac12}\,m^2
\end{equation}
for $x\ge1\,,\,m\in n+\N_0\,,\,m>0$.
\\
By \cite{DLMF}\,Eq.\,13.14.31, $W_{\nu\!,\rho}=W_{\nu\!,-\rho}$\,.
We apply Lemma\,\ref{intasym} to the integral in \eqref{averwhit}, giving
approximations by values of $W_{\!n\!,\,i\lambda}$ and
$W''_{\!n\!,\,i\lambda}$\,.
Recall Whittaker's differential equation (\cite{DLMF}\,Eq.\,13.14.1)
$W''_{\nu\!,\rho}(t)=
(\frac14-\frac\nu t-\frac{\frac14-\rho^2}{t^2})\,W_{\nu\!,\rho}(t)$.
Using this, \eqref{averwhit} gives by easy\vspace{-2mm} computations
\begin{multline}       \label{pappr}
\ \fr P_{nm}^{-\ell-1}(x)=
\Bigl(1-\frac1{x^2}\Bigr)^{\!\frac m2}\Bigl(\frac{x+1}{x-1}\Bigr)^{\!\frac n2}
\frac1{\Gamma(n-\ell)}\,
\frac{\Gamma(m)}{\Gamma(m+\ell+1)}\;\cdot \\
\Bigl(\,
\bigl(1+\frac m{2\,x^2}-\frac nx-\frac{\frac14+\lambda^2}{2\,m}\bigr)\,
W_{\!n\!,\,i\lambda}\bigl(\frac{2\,m}x\bigr)+
O\bigl(\frac{\Gamma(m+\frac12)}{\Gamma(m)\,x^{\frac12}\,m^2}\bigr)\,\Bigr)
\vspace{-8mm}
\end{multline}
uniformly for \;$x\ge1\,,\,m\in n+\N_0\,,\,m>0$\vspace{.5mm}\,.
\\
$\frac{\Gamma(m+\frac12)}{\Gamma(m+\ell+1)}$ is bounded for $m\ge0$\,, by
\cite{O}\,(5.02)\,p.\,119.
$\bigl(1-\frac1{x^2}\bigr)^m\bigl(\frac{x+1}{x-1}\bigr)^n$
is easily seen to be bounded for $x\ge1\,,
m\ge n$ \,($n$ is fixed). This implies that the remainder covered by the
$O$-term will contribute a bounded summand to \eqref{th1mod}.
\\
Similarly, we will dispose of the case where $1\le x\le2$\,. Here one
can estimate separately the two parts of the difference in \eqref{th1mod}.
Observe that the condition $m\in n+\N_0\,,m>0$ implies that $m_0=\inf m>0$\,.
Then the asymptotics of $W_{\nu\!,\rho}$ mentioned above gives
$W_{n\!,\,i\lambda}\bigl(\frac{2\,m}x\bigr)=O((\frac mx)^ne^{-\frac mx})=
O(\frac1{m^3})$
for $1\le x\le2\,,\,m\ge m_0$ \;and boundedness in \eqref{pappr} follows
easily. Similarly for the other part in \eqref{th1mod}.
\\[1mm]
For $x\ge2$\,, we have
\,$\bigl(\frac{x+1}{x-1}\bigr)^{\!\frac n2}=1+\frac nx+O(\frac1{x^2})$ and
$\bigl(1-\frac1{x^2}\bigr)^{\!\frac m2}=1-\frac m{2\,x^2}+O(\frac{m^2}{x^4})$
\,(for $m\le x^2$ the last statement follows from elementary expansions and
for $m\ge x^2$ it holds trivially since the left side is bounded by $1$).
Furthermore, by \cite{O}\,(5.02)\,p.\,119,
$\frac{\Gamma(m)}{\Gamma(m+\ell+1)}=
m^{-\ell-1}(1+\frac{\frac14+\lambda^2}{2\,m}+O(\frac1{m^2}))$  for $m\ge m_0$\,.
Combining these estimates, we get after easy computations\vspace{-1.5mm}
\begin{multline*}
\Bigl(1-\frac1{x^2}\Bigr)^{\!\frac m2}\!
\Bigl(\frac{x+1}{x-1}\Bigr)^{\!\frac n2}\!
\frac{\Gamma(m)}{\Gamma(m+\ell+1)}
\Bigl(1+\frac m{2\,x^2}-\frac nx-\frac{\frac14+\lambda^2}{2\,m}\Bigr)=\\
\frac1{m^{\ell+1}}\Bigl(1+O\bigl(\frac1{m^2}+\frac{m^2}{x^4}+\frac{m^3}{x^6}
\bigr)\Bigr)\qquad\text{uniformly for }m\ge m_0\,,\,x\ge2\,.
\vspace{-6.5mm}
\end{multline*}
It follows that \eqref{th1mod} reduces to\vspace{-1.5mm}
\[
\frac1{m^{\ell+1}}\;O\bigl(\frac1{m^2}+\frac{m^2}{x^4}+\frac{m^3}{x^6}\bigr)\,
W_{n\!,\,i\lambda}\bigl(\frac{2\,m}x\bigr)\;x^{\frac12}\,m^2\,+\,O(1)\,.
\]
Observe that
$\bigl\lvert m^{-\ell-1}\bigr\rvert=m^{-\frac12}$\,. As explained above, we
have \;$W_{n\!,\,i\lambda}(\frac{2\,m}x)=O((\frac mx)^{\frac12})$
for $m\le x$\,, \ and \;$W_{n\!,\,i\lambda}\bigl(\frac{2\,m}x\bigr)=
O((\frac mx)^{-5})$ \,for
$m\ge x$. From this, uniform boundedness of \eqref{th1mod}
follows easily.
\end{proof}
\begin{proof}[\bf Proof of Theorem\,\ref{apcoprinc}\ --\ exceptions]\
\\
As explained at the beginning of the general case, this concerns now the cases
where \,$\ell=-\frac12$ \,and \,$n\notin\frac12+\Z$\,. They arise for the
representations $T_\chi$\,, where $\chi=(-\frac12,\epsilon)$ with
$\lvert\epsilon\rvert<\frac12$\,. Among these is $T_{(-\frac12,0)}$ which
comes (in the setting of \cite{VK}) from the quasi-regular representation
on the space of right cosets $(ZH)\backslash\widetilde G$\,. By Herz's
``principe de majoration" it plays a special r\^ole in the investigations of
the growth of coefficients \,(see \cite{C}\;Sec.1,
\,$\Xi\bigl((\begin{smallmatrix}e^\tau & 0 \\  0 & e^{-\tau}
\end{smallmatrix}) \bigr)=\fr P_{0\,0}^{-\frac12}(\cosh2\tau)$ is
Harish-Chandra's\linebreak $\Xi$-function for $\SL(2,\R)$\,)
and for $\SL(2,\R)$ this is the only exception in the proof.
In all these cases, we have
\,$\fr P_{mn}^{-\frac12}(x)\,\dfrac{\sqrt x}{\log(x)}\to
\dfrac{\sqrt{2}\cos(\pi m)}\pi$ \,for $x\to\infty$ \,(this holds for all
$m,n$ \,with $m-n\in\Z$\,, \,e.g.\;by using \cite{DLMF}\,Eq.\,15.8.9;
\;for $m\ge n\,,\,m>0$ it
follows also from Theorem\,\ref{apcoprinc}, when it is proved). Copying the
proof for the general case
provides an estimate as in Theorem\,\ref{apcoprinc}, with $m^2$ replaced by
$m^{2-\varepsilon}$ for any fixed $\varepsilon>0$. This is still sufficient
for the applications in the corollaries. We indicate now the argument to get
the estimate as stated in Theorem\,\ref{apcoprinc}.
\\
First the case $1\le x\le m$\,. We can take $f(t)=W_{\!n,\,0}(t)$ which
satisfies $f(t)=O(t^{\frac12}\log(t))=O(t^{\frac13})$ for  $0<t\le1$
(\cite{DLMF}\,Eq.\,13.14.19) and $f(t)=O(t^n e^{-\frac t2})$ for
$t\ge1$\,, similarly for the derivatives (see the first paragraph of the proof
for the general case). Now we can apply the extended
version of Lemma\,\ref{intrem} (see Remark\,\ref{remintrem} -- again
with $y=\frac2x$\,; \;starting with
$\alpha=\frac13$\,, then increasing the exponent in the remainder to
$\frac12$) \;allowing
to reduce in \eqref{averwhit} to
$\int_{\frac m2}^{2m}\ $. In the subsequent argument of the proof of
Lemma\,\ref{intasym} the boundedness condition for $f^{(k)}$ is needed
only in the interval $[\frac mx,\frac{4m}x]\subseteq[1,\infty[$\,, hence we
arrive again at \eqref{pappr}. The remaining part can be done as for the
general case.
\\
For $x\ge m$\,, we take $f_1(t)\!=\!\log(t)\sqrt t$ and
$f(t)=W_{\!n,\,0}(t)+\frac1{\Gamma(\frac12-n)}f_1(t)\ \ (t\!\ge\!0)$.
\linebreak By
\cite{DLMF}\,Eq.\,13.14.19, $f(t)=O(t^{\frac12})$ for $t\le4$ (similarly
for the derivatives) and clearly
$f(t)=O(t)$ for $t\ge1$\,. We can apply the extended
version of Lemma\,\ref{intrem} to $f$ (Remark\,\ref{remintrem},
with $\alpha=\frac12$) and then
Lemma\,\ref{intasym} (now $[\frac mx,\frac{4m}x]\subseteq]0,4]$). We have
$I_1:=\int_0^\infty e^{-t}\,t^{m-1}\,f_1(y\,t)\,dt=
y^{\frac12}\bigl(\Gamma(m+\frac12)\log(y)+\Gamma'(m+\frac12)\bigr)$.
By \cite{DLMF} Eq.\,5.11.2,
\;$\frac{\Gamma'(m+\frac12)}{\Gamma(m+\frac12)}=
\log(m+\frac12)-\frac1{2m+1}+O(\frac1{m^2})=\log(m)+O(\frac1{m^2})$, \,hence
\linebreak$I_1=\Gamma(m+\frac12)\,y^{\frac12}\,(\log(y\,m)+O(\frac1{m^2}))$.
Then \;$I_2:=\frac1{\Gamma(m+\frac12)}
\int_0^\infty e^{-t}\,t^{m-1}\,W_{\!n,\,0}(y\,t)\,dt\linebreak
=-\frac1{\Gamma(\frac12-n)}\,y^{\frac12}\,\log(y\,m)+
\frac{\Gamma(m)}{\Gamma(m+\frac12)}\bigl(f(y\,m)+
\frac{(y\,m)^2}{2\,m}f''(y\,m)\bigr)+O(\frac{y^{\frac12}}{m^2})$. Using
again Whittaker's differential equation and
$\frac{\Gamma(m)}{\Gamma(m+\frac12)}=
m^{-\frac12}(1+\frac1{8m}+O(\frac1{m^2}))$  \,(\cite{O}\,(5.02)
p.\,119), one gets \
$I_2=m^{-\frac12}\,W_{\!n,\,0}(y\,m)\,(1+\frac{y^2\,m}8-\frac{y\,n}2)+
O(\frac{y^{\frac12}}{m^2})$.
Estimating the remaining terms in \eqref{averwhit} as in the general case,
the result follows.
\end{proof}
\begin{proof}[\bf Proof of Corollary\,\ref{apl2}]\     
\\
For $n>1$, there are finitely many $m>0$ that are not covered by the theorem.
But since \,$\fr P_{mn}^\ell(t)\to0$ and \,$W_{\!n,\,i\lambda}(\frac1t)\to0$
for $t\to\infty$\,, this part certainly tends to $0$.
The $l^2$-norm of the remaining part can be estimated easily from the
theorem. The second limit relation follows by estimating the
derivative $W_{\!n,\,i\lambda}'$ as in the proof of the theorem.
\end{proof}
\begin{proof}[\bf Proof of Corollary\,\ref{apfour}]\  
\\
Since $h$ has compact support, it is easy to see that $\{h_x:x\ge1\}$
is bounded in $L^2(\R)$. Hence it will be enough to compute the limit for
$g$ in a dense subset, and then by linearity we can reduce to the case
\,$g=e_{n-\epsilon}^\chi$ for some $n\in\epsilon+\Z$\,. Recall that by
\eqref{defP} \,$t_{m-\epsilon\,n-\epsilon}^\chi
\Bigl(\begin{pmatrix}x & 0 \\  0 & \frac1x \end{pmatrix} \Bigr)=
\fr P_{mn}^\ell(\frac{x^2+\frac1{x^2}}2)$ \,for $x\ge1,\;m,n\in\epsilon+\Z$\,.
Using again bounded\-ness of $\{h_x\}$,\vspace{.2mm plus.1mm} we get from
Corollary\,\ref{apl2} and \eqref{fourbasis}, \
$\lim\limits_{x\to\infty}
\:\bigl(\,T_\chi \Bigl(\begin{pmatrix}x & 0 \\  0 & \frac1x
\end{pmatrix} \Bigr)e_{n-\epsilon}^\chi\mid h_x\,\bigr)\linebreak=\
\lim\:\dfrac{\sqrt2}{x^{-2\ell}}\,
{\displaystyle\sum\limits_{m\in\epsilon+\Z}}
\dfrac{(-1)^{m-n}}{\lvert m\rvert^{\ell+1}\Gamma(\sgn(m)n-\ell)}\;
W_{\sgn(m)n,\,\ell+\frac12}\Bigl(\dfrac{4\,\lvert m\rvert}{x^2}\Bigr)\,
e^{-i\pi m}\,\overline{h\Bigl(\dfrac{2\,m}{x^2}\Bigr)}\ =\linebreak
\lim\,\dfrac2{x^2}\,{\displaystyle\sum\limits_{m\in\epsilon+\Z}}
\widehat{e_{n-\epsilon}^\chi}\Bigl(\dfrac{2\,m}{x^2}\Bigr)\,
\overline{h\Bigl(\dfrac{2\,m}{x^2}\Bigr)}\ =\
(\widehat{e_{n-\epsilon}^\chi}\mid h)$\quad(see also the next proof;
note that, as above, for
$\epsilon=0$ the term coming from $\fr P_{0n}^\ell$ does not contribute
to the limit).
\end{proof}
\begin{proof}[\bf Proof of Corollary\,\ref{apL2}]\   
\\
For any $y>0\,,\ J_y\!:(\alpha_m)\mapsto
\frac 1{\sqrt y}\sum_{m\in\epsilon+\Z}\alpha_m\:c_{[ym,y(m+1)[}$
defines an isometric embedding $J_y\!:l^2(\epsilon+\Z)\to L^2(\R)$.
First we treat the part $[0,\infty[$ of $\R$\,. Recall that
$\re(\ell)=-\frac12$\,.
Hence by Corollary\,\ref{apl2} and \eqref{fourbasis}, it is enough to show
that
\[\sum_{\substack{m\in\epsilon+\Z\\m>0}}
\widehat{e_{n-\epsilon}^\chi}(\frac mx)\:
c_{[\frac mx,\frac{m+1}x[}\ \to\ \widehat{e_{n-\epsilon}^\chi}\qquad
\text{for \ }x\to\infty
\]
in $L^2([0,\infty[\,)$. But it follows easily from \eqref{fourbasis} and
the properties of Whittaker functions mentioned before that
$\widehat{e_{n-\epsilon}^\chi}(y)$ is continuous for $y\neq0$, 
\;$\widehat{e_{n-\epsilon}^\chi}(y)=O(\log(\lvert y\rvert))$ for $y\to0$ and
$\widehat{e_{n-\epsilon}^\chi}(y)=O(\frac1{y^2})$ for
$\lvert y\rvert\to\infty$. Similarly for the opposite
half, using that $\fr P_{mn}^\ell=\fr P_{-m\,-n}^\ell$
\,(\cite{VK}\,6.5.5\,(1)).
\end{proof}\vskip1mm
\section{Discrete series}\label{disc} 
\vskip 1mm
\begin{varrem}\label{di31}
Considering the representations $T_\chi$ for general $\ell,\epsilon\in\C$\,,
it turns out (as in \cite{VK}\,6.4.3, see also \cite{HT}\;Prop.\,II.1.2.9)
that for $\ell+\epsilon\in\Z$ \,the closed subspace generated by
the basis vectors $e_n^\chi$ with $n\ge-\ell-\epsilon$ is invariant under
$T_\chi$\,. When $\ell<0\ (\ell\in\R)$ one can define a new inner
product (as in \cite{VK}\,p.\,308\,(8)\,), so that\linebreak
\,$\Bigl(\dfrac{\Gamma(-\ell+\epsilon+n)}
{\Gamma(\ell+\epsilon+n+1)}\Bigr)^{\frac12}\,e_n^\chi$\vspace{1mm} \ for
$n\ge-\ell-\epsilon\,,\,n\in\Z$ \,becomes an orthonormal
basis. It makes the restricted
operators from $T_\chi$ unitary and after completion one gets an
irreducible representation $T_\ell^+$ of $\widetilde G$ resp. $\widetilde{\SU}$
\,(hence $T_\ell^+$ is Naimark equivalent to a subrepresentation of $T_\chi$).
The matrix coefficients, using the new basis, are
\begin{equation} \label{defPd+}
\Cal P_{mn}^\ell=\Bigl(\frac{\Gamma(-\ell+n)\,\Gamma(\ell+m+1)}
{\Gamma(-\ell+m)\,\Gamma(\ell+n+1)}\Bigr)^{\frac12}
\fr P_{mn}^\ell\quad \text{ for }\ m,n\in-\ell+\N_0
\end{equation}
(\cite{VK}\,p.\,321\,(8$'$); again this does not depend on the choice of
$\epsilon\!\!\mod1$\,). We may take $\epsilon=-\ell$\,, then
the new basis is
given by \,$e_n^\ell=\Bigl(\dfrac{\Gamma(-2\ell+n)}
{n!}\Bigr)^{\frac12}\,e_n^{(\ell,-\ell)}$\vspace{.5mm} for $n\in\N_0$ and
\,$\Cal P_{mn}^\ell(\cosh 2\tau)=
(T_{(\ell,-\ell)}(g_\tau)\,e_{n+\ell}^\ell\mid e_{m+\ell}^\ell)=
(T_\ell^+(g_\tau)\,e_{n+\ell}^\ell\mid e_{m+\ell}^\ell)$ for
\,$m,n\!\in-\ell+\N_0\,,\,\tau\ge0$\,. For the case of $\widetilde G$\,,
$g_\tau=\bigl(\begin{smallmatrix}e^\tau & 0 \\  0 & e^{-\tau}
\end{smallmatrix}\bigr)$ and 
by \eqref{defbasis} (real model),
$e_n^{(\ell,-\ell)}(t)=\frac1{\sqrt\pi}(t-i)^n(t+i)^{2\ell-n}$.
\\[.5mm]
Similarly, if $\ell-\epsilon\in\Z$\,, 
\,$e_n^\chi$ with $n\le\ell-\epsilon$ generate a closed subspace invariant
under $T_\chi$\,, and for $\ell<0\ (\ell\in\R)$ there is a new inner
product (see also \cite{VK}\,p.\,321\,(4)\,) giving an irreducible unitary
representation $T_\ell^-$ of $\widetilde G$\,. Coefficients are described by
\begin{equation} \label{defPd-}
\Cal P_{mn}^\ell=\ \Cal P_{-m\,-n}^\ell\ \text{ for }\ m,n\in\ell-\N_0
\end{equation}
(\cite{VK}\,p.\,321\,(8)\,).
$T_\ell^+$ and $T_\ell^-$ where $\ell<0$\,, are called the {\it discrete
series} of representations of $\widetilde G$\,. By arguments 
as in \cite{VK}\,6.4.4, they are mutually non-equivalent.
\end{varrem}
\begin{proof}[\bf Proof of Theorem\,\ref{apcodisc}]\ 
\\
This is similar to the proof of Theorem\,\ref{apcoprinc}. First, we
approximate $\fr P_{nm}^{-\ell-1}$. By \cite{DLMF}\,Eq.\,13.14.15 \,(recall
that $\ell+n\in\N_0$\,, compare also Remark\,\ref{remdisc}\,(c) below),
$\lvert W_{\!n,\,-\ell-\frac12}(t)\rvert\le c\,t^{-\ell}$ holds for $t>0$\,.
Since $\ell\le-\frac12$\,, we can apply Lemma\,\,\ref{intasym} with
$\alpha=\frac12$\,. Then similar arguments as before give the estimate
\,$\fr P_{nm}^{-\ell-1}(x)=\dfrac1{m^{\ell+1}\,\Gamma(n-\ell)}\,
W_{n,\,-\ell-\frac12}\Bigl(\dfrac{2\,m}x\Bigr)+
O\bigl(\dfrac1{x^{\frac12}\,m^{\ell+\frac52}}\bigr)$\vspace{1mm}
uniformly for \;$x\ge1\,,\,m\in n+\Z\,,\linebreak m\ge n$\,. We use that
$W_{\nu\!,\rho}=W_{\nu\!,-\rho}$ \,(\cite{DLMF}\,Eq.\,13.14.31)\,.
By \cite{O}\,(5.02)\,p.\,119, the additional factor appearing in
\eqref{defPd+} is
\;$\Bigl(\dfrac{\Gamma(n-\ell)}{(n+\ell)!}\Bigr)^{\frac12}m^{\ell+\frac12}
\bigl(1+O(\frac1{m^2})\bigr)$
\;and this produces the desired estimate of $\Cal P_{mn}^\ell$.\vspace{-1.5mm}
\end{proof}
\begin{Rems}	  \label{remdisc}	 
(a) \ In the classical case of $\SL(2,\R)$ \,(i.e., $\epsilon=0,\frac12$), the
two conditions $\ell\pm\epsilon\in\Z$ are equivalent.
Note that in \cite{VK} the assignment $\pm$ has been reversed compared to
the earlier edition \cite{V} (which was used in \cite{L1}), hence $T_l$ of
\cite{L1}\,p.\,5 is now $T_{-l-1}^+$ and $e_n^l$ becomes
$e_{n-l-1}^{-l-1}$ (for $l=0,1,\dots,\;n\in l+\N$).
In the notation of \cite{Sa}\,p.\,51, $T_\ell^-$ corresponds to $U^+(\cdot,h)$
with $h=-\ell$ (defined there for $\re h>0$\,, as a representation of the
universal covering group of $\SU(1,1)$\,) and (\cite{Sa}\,p.\,52)
$T_\ell^+$ corresponds to $U^-(\cdot,h)$ for $h=\ell$\,. But on $\SL(2,\R)$,
(due to different transformations) $T_\ell^+$ corresponds to $T^+(\cdot,h)$
with $h=-\ell$ (defined in \cite{Sa}\,p.\,23 for $h=\frac12,1,\dots$)
and $T_\ell^-$ corresponds to $T^-(\cdot,h)$ with $h=\ell=-\frac12,-1,\dots$.
In \cite{VK}\;p.\,458 a different transformation is used for the discrete
series, compared to their general treatment before. Our $T_\ell^+$ corresponds
to their $\hat T_\ell^-$.
In \cite{P}\,p.\,102,
our $T_{-\ell}^+$ is denoted as $D_\ell^+$ and $T_{-\ell}^-$ is $D_\ell^-$
($\ell>0$).
See Remark\;\ref{remcomp}\,(c) for the (non-unitary) generalizations
with arbitrary $\ell\in\C$\,.
\\
Many authors (e.g.\;\cite{Wa}\;p.\,350) use the term discrete series to denote
the irreducible
representations that are square integrable. Note that for $-\frac12\le\ell<0$
the representations $T_\ell^\pm$ are not square integrable\!$\mod Z$ \,and
for $-\frac12<\ell<0$ the representations $T_\ell^\pm$ do not belong
to the reduced dual of $\widetilde G$\,. We follow here the terminology
of \cite{P} and \cite{Sa} which includes what is sometimes called mock
discrete series and limit of discrete series. From the explicit expression
by Jacobi polynomials given in (c), it follows easily that
\,$\fr P_{mn}^\ell(x)\,(\frac x2)^{-\ell}\to\binom{2\ell}{\ell+m}$ for
$x\to\infty$ \;($x\in\R\,,\,\ell\in\C\,,\;m,n\in-\ell+\N_0$\,; similarly
for $m,n\in\ell-\N_0$).
\\[.5mm]
For $\ell=-\frac12$ \,the inner product considered in \ref{di31} is just
the standard one, $e_n^{-\frac12}=e_n^{(-\frac12,\frac12)}$ and
$\Cal P_{mn}^{-\frac12}=\fr P_{mn}^{-\frac12}$\,. Hence (on $\R$) this gives
the
decomposition \,$T_{(-\frac12,\frac12)}=T_{-\frac12}^-\oplus T_{-\frac12}^+$
into irreducible representations.
\\[.5mm]
The representation spaces for $T_\ell^\pm$ ($\ell<0$) can be realized as
spaces of analytic functions. For the realization of $T_{(\ell,-\ell)}^\T$
on $L^2(\T)$ mentioned in Remark\,\ref{remrep}\,(a),\linebreak
$T_\ell^+$ comes from the
subspace generated by $e_n\,,\;n\ge0$\,
giving functions that are analytic outside the closed unit disc
(as in \cite{VK},
but note that the formula for the norm given in \cite{VK}\,p.\,310\,(14$'$)
is not correct; equivalently, \cite{Sa}\;p.\,18ff. uses a space $H_{2,h}(M)$
of conjugate analytic functions on the unit disc and a slightly different
scaling for the inner product). Passage to $\SL(2,\R)$ and $\widetilde G$
(using the transformation
$E_\chi$ as in Remark\,\ref{remrep}\,(b)\,) leads to spaces $\Cal H_\ell^+$
of analytic functions
in the upper half plane. For $\ell<-\frac12\,,\ \Cal H_\ell^+$ is
a weighted Bergman space, \,the norm  described in \ref{di31}
is given by
\;$\lVert f\rVert^2_{\ell+}=\frac4{\Gamma(-2\ell-1)}\,
\int_{y>0}(4y)^{-2\ell-2}\lvert f(x+iy)\rvert^2\,d(x,y)$\,.
For $-\frac12\le \ell<0$ a different description of the norm is needed,
e.g. as a reproducing kernel Hilbert space (see \cite{Sa}\,p.\,19). For
$\ell=-\frac12$ one gets the classical $H^2$-space of the upper half plane.
Applying the Fourier transform (and its extensions, see the comments
after Corollary\,\ref{apl2}) one gets for these functions $\hat f(t)=0$
for $t<0$ \vspace{.5mm}and the norm defines a weighted
$L^2$-space on $[0,\infty[$\,:
\;$\lVert f\rVert^2_{\ell+}=
\int_0^\infty \lvert \hat f(t)\rvert^2(\frac t2)^{2\ell+1}dt$ \;(this works
for all $\ell\!<\!0$). In particular for the basis $(e_n^\ell)$ defined in
\ref{di31}, using \eqref{fourbasis} and the expression
for $W_{\!n,\,\ell+\frac12}$ given in (c) below, results in
\;$\widehat{e_n^\ell}(y)\:=\ e^{i\pi\ell}\,
(\frac{2\,n!}{\Gamma(-2\ell+n)})^\frac12\,e^{-y}\,y^{-2\ell-1}
\,L_n^{(-2\ell-1)}(2 y)$ \,for $y>0\,,n\in\N_0$ \,(as mentioned after
\eqref{fourbasis} a similar description holds for
$(e_n^{(\ell,-\ell)})\sphat$ when $\re(\ell)<0$). For $\ell<0$ the
orthonormality
relations are equivalent to the classical properties of Laguerre polynomials
(\cite{VK}\,p.\,256\,(1)\,).
\\[.5mm plus .5mm]
Similarly, for $T_\ell^-,\ \ell<0$ one gets spaces $\Cal H_\ell^-$ of analytic
functions in the lower half plane
and after Fourier transform, functions supported on $]-\infty,0]$\,.
For $\epsilon=\ell$ a basis is
given by \;$e_{n-}^\ell\!=\Bigl(\dfrac{\Gamma(-2\ell-n)}
{(-n)!}\Bigr)^{\frac12}\,e_n^{(\ell,\ell)}$ for $n\in-\N_0$ satisfying
$\Cal P_{mn}^\ell(\cosh 2\tau)=
\bigl(T_{(\ell,\ell)}(g_\tau)\,e_{(n-\ell)-}^\ell\vert
e_{(m-\ell)-}^\ell\bigr)_{\ell-}$ for
$m,n\in\ell-\N_0\,,\,\tau\ge0$\,. \vspace{.5mm}For $\widetilde G$\,,
\,$g_\tau=\bigl(\begin{smallmatrix}e^\tau & 0 \\ \! 0 & e^{-\tau}
\end{smallmatrix}\bigr)$ and in the real model
$e_{n-}^\ell=\overline{e_{-n}^\ell}$ on~$\R$\,. For the analytic \vspace{.5mm}
extension one gets the reflected function
$e_{n-}^\ell(z)=\overline{e_{-n}^\ell(\bar z)}$ for
$\re(z)\le0\,,\,n\in-\N_0$\,.
\;$T_\ell^-$ is unitarily equivalent to the conjugate (contragradient)
representation of $T_\ell^+$ (compare \cite{VK}\;p.\,306).
As mentioned in Remark\;\ref{remrep} and in \ref{di31}, in the
real model $T_\ell^\pm$ and the basis above do not depend on the choice of
$\epsilon$ (in the coset$\mod1$). Modifying $\epsilon$ changes just the
index set for the basis. In the model \vspace{.5mm}on $\T\,,\ 
T_{(-\frac12,\frac12)}^{\T}\neq T_{(-\frac12,-\frac12)}^{\T}$\,, hence the
decomposition of $T_{(-\frac12,\frac12)}^{\T}$ is just unitarily equivalent
to $T_{-\frac12}^-\oplus T_{-\frac12}^+$ (resp. their models on $\T$).
\\[.5mm]
$L_k^{(\pm\frac12)}$ are closely related to the Hermite polynomials
(\cite{DLMF}\,Eq.\,18.7.19\,,\;Eq.\linebreak18.7.20) which gives other
models of $T_{-\frac14}^+\,,T_{-\frac34}^+$ described in \cite{HT}\;III.2.1.
\vskip1.5mm plus 1.6mm
\item[(b)] \ By similar arguments as in the proof of Theorem\,\ref{apcodisc}
(taking $\alpha=-\ell$) one can show a corresponding estimate, \,with
$x^{\frac12}\,m^2$ replaced by
$x^{-\ell}\,m^{\ell+\frac52}$, working for any (fixed) $\ell<0$\,.
Corollary\,\ref{apl2} extends without problems
\pagebreak(adjusting the expression according to Theorem\,\ref{apcodisc}).
\\[.3mm]
For an analogue of Corollary\,\ref{apfour}, we take a continuous
function $h$ with compact support in $]0,\infty[$ and put \
$h_x=\frac{\sqrt2 e^{-i\pi\ell}}{x^{2\ell+2}}\,
\sum_{m\in\N_0}(-1)^m\,h(\frac{2m}{x^2})\,m^{\ell+\frac12}\,e_m^\ell$\,
\vspace{.5mm}
(where\linebreak $\ell<0\,,\,x\ge1$). Then for $x\to\infty$, it follows as
\vspace{.5mm}before that \
$\bigl(T_\ell^+\bigl((\begin{smallmatrix}x & 0 \\  0 & \frac1x
\end{smallmatrix}) \bigr)g\mid h_x\bigr)\ \to\
\int_0^\infty\hat g(t)\,\overline{h(t)}\,(\frac t2)^{2\ell+1}\,dt$
\,(exhibiting the inner product for $T_\ell^+$ on the Fourier side
as described in (a)\,). Similarly in the case of Corollary\,\ref{apL2}.
\vspace{.8mm}For $x\to\infty$\,, one has
$\sqrt x\,\sum_{m\ge-\ell}(-1)^{m-n}\,\Cal P_{mn}^\ell(x)\:
c_{[\frac mx,\frac{m+1}x[}(t)\ \to\
e^{i\pi n}\,(\frac t2)^{\ell+\frac12}\,\widehat{e_{n+\ell}^\ell}(t)$
\,in $L^2([0,\infty[)$.
\vskip2mm plus 1mm
\item[(c)] \ One has
\,$\fr P_{mn}^\ell(x)=2^m(x-1)^{\!\frac{n-m}2}(x+1)^{\!-\frac{m+n}2}
P_{\ell+m}^{(n-m,-m-n)}(x)$ \,for $x>1\,,\linebreak\ell,m,n\in\C$ with
$\ell+m,n-m\in\N_0$ \,(as in \cite{VK}\,p.\,320\,(2)),
where $P_k^{(\alpha,\beta)}(x)$  denotes the
{\it Jacobi polynomials}. Furthermore, by
\cite{DLMF}\,Eq.\,13.14.3\,,\,Eq.\,13.14.31 and
\cite{O}\,p.\,259,\,Ex.\,10.3,
\,$W_{\!n,\,\ell+\frac12}(x)=
e^{-\frac x2}x^{-\ell}(\ell+n)!\,(-1)^{\ell+n}L_{\ell+n}^{(-2\ell-1)}(x)$ for
$x>0\,,\linebreak \ell,n\in\C$ with $\ell+n\in\N_0$\,, where
$L_k^{(\alpha)}(x)$
denotes the {\it Laguerre polynomials} \,(\cite{DLMF}\- Eq.\,18.5.12).
Thus the first part of the proof of Theorem\,\ref{apcodisc} produces an
approximation on $[1,\infty[$ of Jacobi polynomials in terms of
Laguerre polynomials. With the transformation $y=\dfrac{x-3}{x+1}$
\,(mapping $[1,\infty[$ to $[-1,1[$\,) and using \cite{VK}\,p.\,291\,(8),
one gets an approximation for $y\in[-1,1]$. This gives a refinement of
\cite{VK}\,p.\,298\,(4). In \cite{Du} Dunster constructs approximations
of Jacobi polynomials using a
method of Olver. \cite{Du}\,(2.43) amounts
(with some rewriting) to approximating  $P_h^{(p,q)}(y)$\vspace{.8mm}
\;by\newline
$\dfrac{2^{p+\frac{q+1}2}(h+q+\frac{p+1}2)^{-\frac p2}}
{\sqrt{h+\frac{p+q+1}2}}\;
\dfrac{e^{-\xi_1}\,\xi_1^{\frac{p+1}2}}
{(1-y)^{\frac p2+\frac14}(1+y)^{\frac q2}}\;
\bigl(\dfrac{2h+p+1-\xi_1}{(y-x_t)\,\xi_1}\bigr)^\frac14\:L_h^{(p)}(2\xi_1)$
\ \vspace{.8mm}where\linebreak
$h\in\N_0\,,\;p,q>0\,,\;
x_t=2\,(\frac q{2h+p+q+1})^2-1\,,\
\xi_1=(h+\frac{p+q+1}2)\,\xi$ and \,$\xi\;(=\xi(y))$ \vspace{.5mm}is defined
by a Liouville
transformation (\cite{Du}\,(2.10)). This gives a better approximation, but
$\xi$ is not very explicit. Basically, our approximation (in the case
of $y\in[-1,1]$) amounts to replace $\xi_1$ by a rational linear approximation
$\xi_{app}=(h+q+\frac{p+1}2)\frac{1-y}{3+y}$ which gets quite close outside
some neighbourhood of $1$\,.
\\[.3mm plus.4mm]
Similarly as above (\cite{VK}\,p.\,322\,(1)), $\fr P_{mn}^\ell$ can be
expressed by {\it Jacobi functions} for arbitrary $\ell,m,n\in\C$ with
$m-n\in\Z$
(but be aware that \cite{VK} and \cite{Du} use different scalings for Jacobi
functions).
\cite{Du} gives similar approximations for Jacobi functions
$P_h^{(p,q)}$ by Whittaker functions, restricting to $h,p,q\ge0$ (real).
In fact, the method
works more generally, giving also slightly improved approximations (compared
to our method) in the
case of the principal series, complementary series and the uniformly bounded
representations.
\\[1.5mm plus .5mm]
Various authors gave approximations (and series expansions) using Bessel
functions. From Jones \cite{J}\;(59)\;p.\,384 one gets asymptotics for
$\fr P_{nn}^{-1/2+i\lambda}$ when $n\to\infty$\,.\linebreak
Another approach
is \cite{BG}\;Thm.\,1.2, building on work of Stanton, Thomas,\linebreak
Fitouhi, Hamza
(related is \cite{VK}\;p.\,298,\,(1)-(3)\,).
\\
Going back to the principal series, the Harish-Chandra decomposition
(\cite{Kn}\;Th. 8.32) reduces for $\SL(2,\R)$ and  $\lambda\neq0$ to the
transformation of the hypergeometric function used by Bargmann
([Ba]\;(10.23)\,). The corresponding series expansion converges on
$]1,\infty[$\,.
The partial sums can be used to improve for ``very large $x$" the
approximation obtained from the ``constant term" (see Remark\;\;1.7\,(a)).
An error estimate for the partial sums has been given in
\cite{Bk}\;Th.\,12.1. The error increases with $m$\,. When $x$ gets smaller
the behaviour of the partial sums becomes more complicated, more and more
terms are needed to get below some bound, making the method less useful.
\\
Looking at the asymptotics of Whittaker functions (see the beginning of the
proof of Theorem\;\ref{apcoprinc}), one can see that for $\ell,m,n$ fixed
the approximation of Theorem\;\ref{apcoprinc} does not lead to the ``constant
term of the Harish-Chandra expansion" for $x\to\infty$ (rather giving an upper
bound for the decay). Similarly, for $x$ fixed, the speed of decay to $0$
for $m\to\infty$ is not detected by the Whittaker approximation.
\\
Comparing now the approximations by Bessel, Whittaker and
Harish-Chandra/ Bargmann expansion (partial sums): \ all three are
applicable throughout $]1,\infty[$\,, but their ``useful scopes" are somehow
complementary. For $\ell,n$ fixed, $\lvert m\rvert$ large, Bessel serves
best (roughly) for  $x$ small compared to $m$\,, Harish-Chandra is most
useful for  $x$ large compared to $m$\,, and Whittaker for $x$ of 
``comparable size" to $m$\,.
\\
Similarly for the discrete series. The analogue of the ``constant term" is
to take just the term of maximal exponent from the Jacobi polynomial
(determining the asymptotics for $x\to\infty$) and then the methods are
performing as above.
\end{Rems}\vskip1mm
\section{Complementary series, uniformly bounded representations}\label{compl} 
\vskip 1mm		  
\begin{varrem}\label{di41}
Similar constructions as in \ref{di31} can be done when
$\ell,\epsilon\in\R\,,\ -1<\ell<0$ and
$\lvert\epsilon\rvert<\frac12-\lvert\frac12+\ell\rvert$.
One can define a new inner product, so that
\,$e_n^{\chi,c}=\Bigl(\dfrac{\Gamma(-\ell+\epsilon+n)}
{\Gamma(\ell+\epsilon+n+1)}\Bigr)^{\frac12}e_n^\chi 
\ \;(\,=\bigl(\frac{\,\Gamma(-\ell-\epsilon-n)\; \sin(\pi(\ell+\epsilon))}
{\Gamma(\ell-\epsilon-n+1)\sin(\pi(\ell-\epsilon))}
\bigr)^{\frac12}e_n^\chi\,)$ for
$n\in\Z$ \,becomes an\vspace{.5mm} orthonormal
basis. It makes the
operators defined by $T_\chi$ unitary and after completion, one gets an
irreducible unitary representation $T_\chi^c$ of $\widetilde G$
\,(as in \cite{VK}\,(6.4.2)\,;
hence $T_\chi^c$ is Naimark equivalent to $T_\chi$).
This is called the {\it complementary
series} of representations of $\widetilde G$\,.
$T_{(\ell,\epsilon)}^c$
is equivalent to $T_{(-\ell-1,\epsilon)}^c$ (see also \cite{Sa}\,p.\,44).
Apart of this, the representations $T_\chi^c$ are mutually non-equivalent
(as in \cite{VK}\,(6.4.4)\,)
for $\ell,\epsilon$ as above. For $\ell=-\frac12$ the inner product
is unchanged and this intersects with the principal series
(section\,\ref{Main}). Together with the trivial representation, this
gives all the irreducible unitary representations of $\widetilde G$
\,(up to equivalence). \cite{P} denotes $T_\chi^c$ as $E_q^{(\tau)}$ with
$q=-\ell(\ell+1)\,,\;\tau=\epsilon \text{ or }\epsilon+1\,,\;0\le\tau<1\,,\;
\tau(1-\tau)<q\le\frac14$\,.
In the classical case of $\SL(2,\R)$, the conditions for the complementary
series given above enforce $\epsilon=0$\,.
\\
The Hilbert space $\Cal H_{\chi,c}$ for $T_\chi^c$ (with norm
$\lVert\cdot\rVert_{\chi,c}$) can be realized (working on $\R$) in a similar
way as in Remark\,\ref{remdisc}\,(a). For $-1<\ell<-\frac12$ \,(hence
$\lvert\epsilon\rvert<1+\ell$), using the transformation $E_\chi$ as before,
one gets for a \vspace{.5mm}bounded $L^2$-function $f\!:\R\to\C$\,,
\;$\lVert f\rVert^2_{\chi,c}=
\iint_{\R^2}k_\chi(x-y)f(y)\,\overline{f(x)}\,d(x,y)$ \vspace{.5mm}with
$k_\chi(x)=\frac{e^{\scriptstyle i\mspace{1mu}\epsilon\pi\sgn(x)}}
{\Gamma(-2\ell-1)\sin(\pi(-\ell-\epsilon))}\;
\lvert 2x\rvert^{-2\ell-2}$ \,(up to a constant this is the inner product
of \cite{Sa}\,(2.4.9) defining the Hilbert space $\Cal H'_{\sigma,h}$
with $\sigma=-\ell-\frac12\,,\,h=\epsilon$). Similarly for $-\frac12<\ell<0$
if one adds the assumption $f'\in L^2$ (distributional derivative; also
using the distributional version of $\lvert x\rvert^{-2\ell-2}$,
\cite{VK}\;3.1.6).
\\
Combined with Fourier transform one gets
a weighted $L^2$-space on $\R$\;: \ $\lVert f\rVert^2_{\chi,c} =
\int_{-\infty}^\infty \lvert \hat f(t)\rvert^2\,\rvert\frac t2\rvert^{2\ell+1}
\,\frac{\sin(\pi(\ell+\sgn(t)\epsilon))}{\sin(\pi(\ell+\epsilon))}\:dt$
\,(\vspace{.5mm}this works for all $-1<\ell<0\,,\,
\lvert\epsilon\rvert<\frac12-\lvert\frac12+\ell\rvert$\,; compare
\cite{KS}\;(7.5)\,). By \cite{KS}\;L.\,20, for
$-1<\ell\le-\frac12$ we have \,$L^p(\R)\subseteq\Cal H_{\chi,c}$ with
$p=-\frac1\ell$ (bounded embedding) and by duality one gets for
$-\frac12\le\ell<0$ that $\Cal H_{\chi,c}\subseteq L^p(\R)$\,. 
\\
For $\ell=\ell_\R+i\lambda$ put $\chi_\R=(\ell_\R,0)$. For $-1<\ell_\R<0\,,\;
\epsilon,\lambda\in\R$\,, one can show
similarly as in \cite{ACD}\;L.\,4.3
(using the Mellin transform) that the operators
$T_\chi(g)\ (g\in\widetilde G)$ are {\it uniformly bounded} for
$\lVert\cdot\rVert_{\chi_\R,c}$ \,(in principle this can also be deduced
from the formulas in \cite{Sa}\;Ch.\,3,\,\S1,2). As in \cite{KS}\;L.\,18,
one can see that for all $\chi$ with $-1\le\re(\ell)\le0\,,\;\epsilon\in\R$
the operators $T_\chi(g)\ (g\in\widetilde G)$ are isometric for
$\lVert\cdot\rVert_p$ with $p=-\frac1{\re(\ell)}$\,, hence $T_\chi$ extends
also to a representation on $L^p(\R)$\,.
\\[2mm plus.5mm]
For $\re(\ell)>-\frac12\,,\ \ell\notin\frac12\Z\,,\ m-n\in\Z$\,, one has
\,$\fr P_{mn}^\ell(x)\,(\frac x2)^{-\ell}\to
\frac{\Gamma(2\ell+1)}{\Gamma(\ell+m+1)\Gamma(\ell-m+1)}$ for $x\to\infty$
\;(with the convention $\frac1\infty=0$ as before). For $\re(\ell)<-\frac12$
one can use again that $\fr P_{mn}^\ell=(-1)^{m-n}\,\fr P_{nm}^{-\ell-1}$\,.
Compare also Remark\,\ref{remdisc}\,(a)\, for the cases \,$m,n\in-\ell+\N_0$
or \,$m,n\in\ell-\N_0$ with $\ell\in\C$ (discrete series)
and Remark\,\ref{remprinc}\,(a)\, for $\re(\ell)=-\frac12$\,.\vspace{-.5mm}
\end{varrem}
\begin{varrem}\label{di42}	 
Analogously to section\,\ref{disc},  $\Cal P_{mn}^\ell$ is defined as in
\eqref{defPd+}, but now with \,$m,n\in\epsilon+\Z$
\,(\eqref{defPd-} holds as well for such $m,n$).
Similarly as in Theorem\,\ref{apcodisc} (see also
Remark\,\ref{remprinc}\,(b)\,) one can show that for fixed $\ell,n\in\R$
with $-1<\ell<0\,,\;n\in\epsilon+\Z$ and
$\lvert\epsilon\rvert<\frac12-\lvert\frac12+\ell\rvert$\,,
one has \;$\Cal P_{mn}^\ell(x)\,=\,\frac{(-1)^{m-n}\sgn(\Gamma(n-\ell))}
{{\textstyle(}m\,\Gamma(n-\ell)\,\Gamma(n+\ell+1){\textstyle)}^{\frac12}}\,
W_{n\!,\,\ell+\frac12}\bigl(\frac{2\,m}x\bigr)
+\,O\bigl(\frac1{m^{\frac52}}
(\frac xm)^{\lvert\ell+\frac12\rvert-\frac12}\bigr)$ \vspace{.5mm}\,uniformly
for all
$m\in n+\N_0$ with $m>0$ and all $x\ge1$ \,(the condition on $\epsilon$
implies \,$\sgn(\Gamma(n-\ell))=
\sgn(\Gamma(n+\ell+1))$\,).
It follows (as an analogue to Corollary\,\ref{apL2}, see also
Remark\,\ref{remdisc}\,(b)\,) that
for $x\to\infty$\,, one has
$\sqrt x\,\sum_{m\in\epsilon+\Z}(-1)^{m-n}\,\Cal P_{mn}^\ell(x)\:
c_{[\frac mx,\frac{m+1}x[}(t)\ \to\
e^{i\pi n}\,(\frac t2)^{\ell+\frac12}\bigl(
\frac{\sin(\pi(\ell+\sgn(t)\epsilon))}{\sin(\pi(\ell+\epsilon))}
\bigr)^{\frac12}
\vspace{.5mm}\,\widehat{e_{n-\epsilon}^{\chi,c}}(t)$
\,in $L^2(\R)$. Similarly there are analogues of the
Corollaries\;\ref{apl2},\,\ref{apfour}.
\\[2mm]
For $\ell\in\C\,,\;\epsilon\in\R$ consider $\ell_\R\,,\,\chi_\R$ as in
\ref{di41}. For $\ell\notin\R$ the $e_n^\chi$ are no longer orthogonal
in $\Cal H_{\chi_\R,c}$\,. But by uniform boundedness of $T_{\chi}$
(for $\lVert\cdot\rVert_{\chi_\R,c}$) there exists an equivalent
Hilbert norm on $\Cal H_{\chi_\R,c}$ making $T_{\chi}$ unitary on
$\widetilde K$ and it follows that $(e_n^\chi)_{n\in\Z}$ is still a basis of 
$\Cal H_{\chi_\R,c}$ in the Banach space sense and
$\bigl(\fr P_{m+\epsilon\,n+\epsilon}^\ell(\cosh2\tau)\bigr)_{m,n\in\Z}$ gives
the corresponding matrix representation for (the extension of) the operators
$T_\chi(g_\tau)$ on $\Cal H_{\chi_\R,c}$\,, where (for $\widetilde G$)
\,$g_\tau=\bigl(\begin{smallmatrix}e^\tau & 0 \\ \!0 & e^{-\tau}
\end{smallmatrix}\bigr)$. Using again Remark\,\ref{remprinc}\,(b), one can
also prove asymptotic properties for the columns of these matrices.
\vspace{-3mm plus.5mm}
\end{varrem}
\begin{Rems}	  \label{remcomp}	  
(a) \ For $\alpha<\frac12$  we consider (as in \cite{ACD}\;p.\,135) the space
$H^\alpha=\{f: \lvert t\rvert^\alpha\hat f(t)\in L^2(\R)\}$ \,a
(Hilbert-)\,subspace
of $\Cal S'(\R)$ (tempered distributions), related to a fractional Sobolev
space (in \cite{KS}\;L.\,20,\,p.\,32 this is $H_\sigma$ with
$\sigma=\alpha-\frac12\,,\linebreak 0<\sigma<\frac12$). For $a,b>0$ there are
additional norms (and corresponding inner products)
$\lVert f\rVert_{H^\alpha_{a,b}}^2=
a\,\int_0^\infty\bigl\lvert\, t^\alpha\hat f(t)\bigr\rvert^2dt+
b\,\int_{-\infty}^0
\bigl\lvert\,\lvert t\rvert^\alpha\hat f(t)\bigr\rvert^2dt$\,. Then, as
mentioned in \ref{di42}, for $\ell,\epsilon\in\R\,,\ -1<\ell<0$ and
$\lvert\epsilon\rvert<\frac12-\lvert\frac12+\ell\rvert$, one has
$\Cal H_{\chi,c}=H^{\ell+\frac12}_{a,b}$ with
$(a,b)=
2^{-2\ell-1}(1,\frac{\sin(\pi(\ell-\epsilon))}{\sin(\pi(\ell+\epsilon))})$.
For each $\ell$ the range of $\epsilon$ covers all $b>0$\,.
\\[2.5mm plus.5mm]
Consider now $\ell\in\C$ with $\ell_\R=\re(\ell)$ satisfying $-1<\ell_\R<0$\,.
Then for each $\epsilon\in\R$ and $a,b>0$\,, taking $\chi=(\ell,\epsilon)$,
$T_\chi$ defines a {\it uniformly bounded representation} $T_\chi^{a,b}$ of
$\widetilde G$ on $H^{\ell_\R+\frac12}_{a,b}$
\,(for $\SL(2,\R)$, i.e. $\epsilon=0,\frac12$, \cite{KS}\;p.\,20, taking up
a construction of \cite{EM}, introduced
this for $a=b=1$\,, using the notation $v^\pm(\cdot,s)$ with $s=\ell+1$;
then in \cite{ACD}\;p.\,136 this
is denoted for general $a,b>0$ as $\pi_{\lambda,\epsilon'\!,a,b}$ where
$\lambda=\ell+\frac12\,,\epsilon'=2\epsilon$). For $(a',b')=\alpha_0(a,b)$
with $\alpha_0>0$ it is easy to see that the representations are unitarily
equivalent. Otherwise (for $\chi$ fixed, excluding $(-\frac12,\epsilon)$ with
$\epsilon\in\frac12+\Z$) they are just boundedly equivalent
(\cite{ACD}\;p.\,128 calls it similar). Note that the range of $\epsilon$
occuring for these uniformly bounded representations exceeds that arising
for the complementary series. In particular, for
$\ell\in\R\,,\;-1<\ell<0\,,\;\epsilon\in-\ell+\Z\,,\linebreak
\;T_\chi^{a,b}$ contains
$T_\ell^+$ from the discrete series as a proper subrepresentation
(for $a=2^{-2\ell-1}$, see Remark\;\ref{remdisc}\,(a); otherwise a unitarily
equivalent copy). Similarly for $\epsilon\in\linebreak\ell+\Z\,,\;T_\chi^{a,b}$ contains
$T_\ell^-$. For
$\ell=-\frac12\,,\;\epsilon\in\frac12+\Z\,,\;T_\chi^{a,b}\cong T_\chi$ for
all $a,b>0$\,.
\\[2.5mm plus.5mm]
For a uniformly bounded representation $T$ of $\widetilde G$ put
$\lVert T\rVert_{ub}=\sup_{g\in\widetilde G}\lVert T(g)\rVert$\,.
In the case of $\SL(2,\R)$\,, i.e.\;$\epsilon=0,\frac12$\,, \;\cite{ACD}
proved
various results on $\lVert T_\chi^{a,b}\rVert_{ub}$\,. For $\chi$ fixed,
\cite{ACD}\;Th.\,1.1 shows that $\lVert T_\chi^{a,b}\rVert_{ub}$ gets
minimal for $a=b$ and for $\chi\neq(-\frac12,\frac12)$ the minimum is strict.
This is no longer true for general $\epsilon$\,. For
$\epsilon\notin\pm\ell+\Z$
(these are the cases where $T_\chi$ is irreducible, see (c) below) the
minimal norm is attained for
$\frac ba=\big\lvert\frac{\sin(\pi(\ell-\epsilon))}
{\sin(\pi(\ell+\epsilon))}\big\rvert$
\,(and this is also strict). For \,$\im\ell\to\infty$ ($\epsilon$ fixed),
this optimal value for $\frac ba$ tends to $1$\,. For $\epsilon\in-\ell+\Z$
the infimum of the norms is $1$\,, but it is not attained. For
$\lVert T_\chi^{a,b}\rVert_{ub}$ approaching the infimum with $b$ bounded,
one has $a\to0$\,, but the ``limit case" $T_\chi^{0,b}$ ($b>0$) falls outside
the range considered above (it corresponds to the quotient representation
induced by $T_\chi$ on $\Cal H_{\chi_\R,c}/\Cal H_\ell^+$ which is
unitarily equivalent to $T_{-\ell-1}^-$). Similarly for
$\epsilon\in\ell+\Z$\,.
For $\ell$ {\it real} ($-1<\ell<0$),
$\lvert\epsilon\rvert<\frac12-\lvert\frac12+\ell\rvert$,
the minimal norm is $1$\,, attained by the complementary series
representation $T_\chi^c$\,. For\vspace{1mm}
$\frac12-\lvert\frac12+\ell\rvert<\lvert\epsilon\rvert\le\frac12$\,,
the optimal value is \;$\min\,\lVert T_\chi^{a,b}\rVert_{ub}=
\frac{\lvert\sin(\pi\epsilon))\rvert+
\sqrt{\sin^2(\pi\epsilon)-\sin^2(\pi\ell)}}
{\lvert\sin(\pi\ell)\rvert}$\,.\vspace{1mm} On the other hand,
$\lVert T_\chi^{1,1}\rVert_{ub}^2=1+\frac2{\sin^2(\pi\ell)}
\bigl(\cos^2(\pi\ell)\sin^2(\pi\epsilon)+
\lvert\cos(\pi\ell)\sin(\pi\epsilon)\rvert
\sqrt{1-\cos^2(\pi\ell)\cos^2(\pi\epsilon)}\,\bigr)\vspace{.5mm}$
holds for all $-1<\ell<0\,,\;\epsilon\in\R$\,. In particular, for
$\lvert\sin(\pi\epsilon)\rvert=\lvert\sin(\pi\ell)\rvert$ (these are the
exceptional cases $\epsilon\in\pm\ell+\Z$ mentioned above) one
gets\vspace{.5mm} $\lVert T_\chi^{1,1}\rVert_{ub}^2=
1+2\cos^2(\pi\ell)+2\cos(\pi\ell)\sqrt{1+\cos^2(\pi\ell)}$\,\,.
\\[1mm plus1mm]
The estimates given in \cite{ACD} for $\lVert\cdot\rVert_{ub}$ in the case
of $\SL(2,\R)$ have analogues for~$\widetilde G$\,. There exist $c_1,c_2>0$
such that for optimal weights $(a_o,b_o)$ one has (slightly improving the
bounds of \cite{ACD}\;L.\,4.9 and also the weaker earlier bounds of
\cite{KS}\;p.\,38)
\;$c_1\,<\,\lVert T_\chi^{a_o,b_o}\rVert_{ub}\;\big/\;
\frac1{\lvert\ell_\R\rvert}\,(\lvert\im(\ell)\rvert+1)^{\ell_\R+\frac12}
\min(\lvert\ell\rvert+\lvert\epsilon\rvert,1)\,<\,c_2$ for all
$\lvert\epsilon\rvert\le\frac12$ and all $\ell\in\C$ with
$-\frac12\le\ell_\R<0$ \,and
\;$c_1\,<\,\lVert T_\chi^{a_o,b_o}\rVert_{ub}\;\big/\;
\frac1{\ell_\R+1}\,(\lvert\im(\ell)\rvert+1)^{-\ell_\R-\frac12}\cdot\linebreak
\min(\lvert\ell+1\rvert+\lvert\epsilon\rvert,1)\,<\,c_2$ for
$-1\!<\!\ell_\R\!\le\!\frac12$\,. Similarly for $\lVert T_\chi^{1,1}\rVert_{ub}$
\;(with a slightly bigger interval). The asymptotics of \cite{ACD}\;Th.\,1.1
for $\lvert\im(\ell)\rvert\to\infty$ carries over for every $\epsilon\in\R$\,.
\\[.3mm plus.5mm]
The representations $T_\chi^{a,b}$ act on different Hilbert spaces
$H^{\ell_\R+\frac12}_{a,b}$\,. Put
$(M_\ell f)\sphat(t)=\lvert t\rvert^{\ell+\frac12}\hat f(t)$\,. This defines
isometric isomorphisms $M_\ell\!:H^{\ell_\R+\frac12}_{1,1}\to L^2(\R)$
\,(correspond\-ing to the operators $W(s,\frac12)$ of \cite{KS}\;p.\,22 with
$s=\ell+1$\,, combined with Fourier duality). Then
$g\mapsto M_\ell\,T_\chi^{1,1}(g)M_\ell^*\ (-1<\ell_\R<0\,,\;\epsilon\in\R)$
is a family
of uniformly bounded representations of $\widetilde G$ on $L^2(\R)$\,.
Similarly as in \cite{KS}\;p.\,37 one can show that for fixed
$\epsilon\in\R\,,\,g\in\widetilde G$ \,the operators depend {\it analytically}
on $\ell$\,. For $\ell_\R=-\frac12$ one gets unitary equivalents of the
principal series representations. But for $\ell\in\R\,,\,\ell\neq-\frac12$
and $\epsilon\notin\frac12\Z$ the representations (being unitarily equivalent
to $T_\chi^{1,1}$) are just boundedly equivalent to the representations
$T_\chi^c$ from the complementary series. With a similar construction,
inserting additional multiplication operators, \cite{Sa}\;p.\,56 defines
isometric isomorphisms $M_\ell\!:H^{\ell_\R+\frac12}_{a_o,b_o}\to L^2(\R)$
(for optimal weights $(a_o,b_o)$ as above) and this gives another
family of uniformly bounded representations containing also (unitary
equivalents of) the complementary series representations. But for this family
the domain of analyticity (resp.\;definition) is smaller. For
$0<\lvert\epsilon\rvert\le\frac12$ one gets points of infinity at
$\ell=-\lvert\epsilon\rvert,\lvert\epsilon\rvert-1$ and jump discontinuities
on the segments
$\ell\in\,]\!-\!\lvert\epsilon\rvert,0\,[\;\cup\;
]\!-\!1,\lvert\epsilon\rvert-1[$\,,
see \cite{Sa}\;Th.\,3.0.1.
\vskip1mm plus1mm
\item[(b)] \ In \cite{W} Wiersma considered $L^p$-representations
(and corresponding $C^*$-algebras; see also \cite{BGu},\cite{BR}). In the
irreducible case, the $L^2$-representations (of a locally compact group)
are just the square integrable
representations (\cite{Wa}\;4.5.9) and the $L^1$-representations are
the integrable representations (as defined by Harish-Chandra,
\cite{Wa}\;p.\,356). For a square integrable irreducible representation
$\pi$\,, all coefficient functions belong to $L^2(G)$
\,(\cite{Wa}\;L.\,4.5.9.1), i.e. $A_\pi\subseteq L^2(G)$
\,(notation of \cite{W}\;sec.\,2).
For integrable irreducible representations (and more generally
$L^p$-representations with $p\neq2$) this need not be true: \ when
considered as representations of $\SL(2,\R)$ \,(i.e. $\ell\in-\frac12\N$),
looking at the growth of $\fr P_{mn}^\ell(x)$ described in
Remark\;\ref{remdisc}\,(a), it follows easily that $T_\ell^\pm$ are
integrable for $\ell<-1$\,. From Theorem\;\ref{apcodisc} and the modified
estimate of
\ref{remdisc}\,(b), one can show that for the coefficients $t_{mn}^\ell$
associated to $\Cal P_{mn}^\ell$ (see \ref{di31}), one has
$\lVert t_{mn}^\ell\rVert_1\to\infty$
for $m\to\infty\ (n$ fixed). Hence (using \cite{Wa}\;Prop.\,4.5.9.6 and
duality) there are non-integrable coefficients for $T_\ell^+$.
\\[2.5mm plus.5mm]
If the centre $Z$ of $G$ is not compact there are no irreducible
$L^p$-representations (for $p<\infty$). Similarly to \cite{MW}, one can
modify the definition using mixed-norm spaces. For a bounded continuous
function $f\!:G\to\C$ put
\,$f_{Z,\infty}(\dot x)=\sup_{z\in Z}\lvert f(xz)\rvert\
(x\in G\,,\,\dot x=xZ)$. We call $\pi$ an \,$L^p$-representation$\!\mod Z$\,,
if $f=\pi_{\eta\eta}$ satisfies $f_{Z,\infty}\in L^p(G/Z)$ for $\eta$ in
a dense subset of $\Cal H_\pi$\,.  Dually, for $Z$ discrete, we consider
\,$f_{Z,1}(\dot x)=\sum_{z\in Z}\lvert f(xz)\rvert\,\
(x\in G\,,\,\dot x=xZ$\,; for general $Z$ one uses a Haar measure of $Z$).
$L^p_{Z,1}(G)$ denotes the space of classes of Borel measurable functions $f$
for which $f_{Z,1}\in L^p(G/Z)$.
\\[1mm plus.5mm]
Considering auxiliary groups $G_\epsilon=(\widetilde G\times\T)/D_\epsilon$\,,
where $D_\epsilon=\{(k\pi,e^{-2\pi ik\epsilon}):k\in\Z\,\}$ and using
the $\lVert\ \rVert_{ub}$ estimates from (a) above, one can extend the
interpolation techniques of \cite{KS}\;Th.\,4 to the representations
$T_{(\ell,\epsilon)}^{1,1}$ for every real $\epsilon$\,. This gives analogues
of \cite{KS}\;Th.\,7 and its corollaries: \ for $1\le p<2\,,\;
T_{(\ell,\epsilon)}^{1,1}$ extends to
$L^p_{Z,1}(\widetilde G)$ when $-\frac1p<\re(\ell)<\frac1p-1$\,.
\\[1mm]
For the complementary series, it follows that for $-1<\ell\le-\frac12\,,\
\lvert\epsilon\rvert<1+\ell\,,\ T_{(\ell,\epsilon)}^c$ extends to
$L^p_{Z,1}(\widetilde G)$  when $1\le p<-\frac1\ell$\;. Dually,
$T_{(\ell,\epsilon)}^c$ is an $L^q$-representation$\!\mod Z$ for
$q>\frac1{\ell+1}$ and this holds in the stronger sense that
$f_{Z,1}\in L^q(\widetilde G/Z)$ is satisfied for all coefficients $f$\,. For
$q=\frac1{\ell+1}$ there are no non-zero coefficients with
$f_{Z,1}\in L^q(\widetilde G/Z)$ \,(using the asymptotics of
$\fr P_{mn}^\ell(x),\ x\to\infty$\,, given in \ref{di41}).
This corresponds to the bounds given in \cite{W}\;p.\,3948 for
$\SL(2,\R)$
(in the notation there $r=2\ell+1$\,). Recall that
$T_{(-1-\ell,\epsilon)}^c\cong T_{(\ell,\epsilon)}^c$\,.
\\[.5mm]
For the discrete series, it follows from the asymptotics given in
Remark\;\ref{remdisc}\,(a) that for any $\ell<0\,,\ T_\ell^\pm$ are
$L^q$-representations$\!\mod Z$ for all $q>-\frac1\ell$ and for
$-1\le\ell<0\,,\ q=-\frac1\ell$ \,there are no non-zero coefficients with
$f_{Z,1}\in L^q(\widetilde G/Z)$\,. In particular, for
$\ell<-\frac12\,,\ T_\ell^\pm$
are square integrable$\!\mod Z$ and (as mentioned above) it follows
that $T_\ell^\pm$ extend to $L^2_{Z,1}(\widetilde G)$ and that
$f_{Z,1}\in L^2(\widetilde G/Z)$ for all coefficients~$f$\,.
For any $q<2$ and $\ell<0$ (using again Theorem\;\ref{apcodisc} and the
modified estimate of \ref{remdisc}\,(b)\,) there are some coefficients
of $T_\ell^\pm$ for which $f_{Z,1}\notin L^q(\widetilde G/Z)$.
\\[1mm plus.5mm]
For the principal series, i.e. $\re\ell=-\frac12\,,\ \epsilon\in\R$\,,
$T_{(\ell,\epsilon)}$ extends to
$L^p_{Z,1}(\widetilde G)$  when $1\le p<2$\,, $T_{(\ell,\epsilon)}$ is an
$L^q$-representation$\!\mod Z$ (in the stronger sense) for $q>2$\,,
but there are no non-zero coefficients with
$f_{Z,1}\in L^2(\widetilde G/Z)$ \,(this property could also be obtained
from the ``principe de majoration", reducing to $T_{(-1/2,0)}$).
\\[1mm plus.5mm]
For finite covering groups of $\SL(2,\R)$ (or $\PSL(2,\R)$) and more
generally groups $G$ with compact centre $Z$ satifying $G/Z\cong\PSL(2,\R)$
(like the groups $G_\epsilon$ above) one can omit ``\!\!$\mod Z$\," from the
statements above, considering ordinary
$L^q$-representations and extensions to ordinary $L^p$-spaces for the
$\epsilon$ matching $G$.
\vskip2mm plus.5mm
\item[(c)] \ For a function $f\!:\R\setminus\{0\}\to\C$ put
$Jf(x)=e^{(1-\sgn(x))\pi i\epsilon}\lvert x\rvert^{2\ell}f(\frac1x)\
(x\neq0)$.
Then (similarly to \cite{VK}\,p.\,376 who uses in the case
$\epsilon=0,\frac12$ the notation $\hat f$ instead of $Jf$) one can
characterize the space $\Cal D_\chi^\R$ introduced in
Remark\;\ref{remrep}\,(a)
\,(as a domain for the smooth version $T_\chi^s$) as
\,$\Cal D_\chi^\R=\{f\in C^\infty(\R):\;Jf \text{ extends to a
$C^\infty$-function}\linebreak\text{on }\R\,\}=E_\chi C^\infty(\T)$\,.
This imposes additional conditions on
the asymptotic behaviour of $f$\,, e.g. %
$\lim_{x\to-\infty}\lvert x\rvert^{-2\ell}f(x)=
e^{-2\pi i\epsilon}\lim_{x\to\infty} x^{-2\ell}f(x)$ \,(and both limits have
to exist fi\-nitely). It contains the Schwartz-space $\Cal S(\R)$ \,and the
functions $e_m^\chi$ of \eqref{defbasis} (the eigenfunctions under the action
of $\widetilde K$).
$\Cal D_\chi^\R$ coincides with the space of $C^\infty$-vectors for the
representation $T_\chi$
(more precisely $T_\chi^\R$) on $\Cal H_\chi$ \,, the same for the
modifications $T_\chi^c$ on $\Cal H_\chi^c\ (-1<\ell<0)$ and
$T_\chi^{a,b}$ on $H^{\ell_\R+\frac12}_{a,b}$. For
$T_\ell^+\ (\ell<0)$ one gets the subspace of $f\in\Cal D_{(\ell,-\ell)}^\R$
that have analytic extensions to the upper half plane.
\\[3mm]
For the real model and $\epsilon\in\R\,,\,\ell\in\C$ one has
\,$\Cal H_\chi=E_\chi L^2(\T)=L^2(\R,\linebreak(x^2+1)^{-2\re(\ell)-1}dx)$
\,(for general $\epsilon\!\in\!\C$ it is
$L^2(\R,e^{-4\im(\epsilon)\arccot(x)}(x^2+1)^{-2\re(\ell)-1}dx)$).
\\[3mm plus1mm]
The classification of the representations $T_\chi$ can be done similarly as
in \cite{VK}\;6.4 (see also \cite{HT}\,Prop.\,II.1.2.9,\,p.\,63; recall that
the corresponding Harish-Chandra module of
$\widetilde K$-finite vectors is isomorphic to
$U(\nu^+,\nu^-)$ with $\nu^+=-\ell+\epsilon\,,\ \nu^-=-\ell-\epsilon$).
$T_\chi$ is indecomposable for all
$\chi\in\C^2\setminus\{(-\frac12,\epsilon)\!:\;\epsilon\in\frac12+\Z\}$.
It is (topologically) irreducible iff $\epsilon\notin\pm\ell+\Z$
\,(the same for the Harish-Chandra modules).
For $\epsilon\in\ell+\Z$ the closed subspace generated by
$e_n^\chi\,,\ n\le\ell-\epsilon$ is invariant  (this corresponds to
\cite{HT}\,Prop.\,II.1.2.9\,(c),(d); for $\ell\notin\frac12\,\N_0$ one
gets $\overline{V}_{2\ell}$ infinitesimally\,). Again, it depends only on the
coset of \,$\epsilon\!\!\mod1$\,. For $\epsilon=\ell$ it is given by
$E_\chi H^2(\T)$ as a generalized Hardy space of analytic functions on the
lower half plane. The induced representation
on the quotient space corresponds to $V_{-2\ell-2}$ (but for
$\re(\ell)\neq-\frac12$ the corresponding group representations are not
boundedly equivalent, see below).
This gives also a way to get, for real $\ell>-1$\,, $T_{-\ell-1}^+$
from a quotient of
$T_{(\ell,\ell)}$ (with a suitable renorming), similarly
with $T_{-\ell-1}^-$ from $T_{(\ell,-\ell)}$, see also \cite{VK}\,p.\,302.
For $\epsilon\in-\ell+\Z$ the closed subspace generated by
$e_n^\chi\,,\ n\ge-\ell-\epsilon$ is invariant (this corresponds to
\cite{HT}\,Prop.\,II.1.2.9\,(b),(d)\,).
For $\ell\in\frac12\,\Z\,,\,\epsilon\in\ell+\Z$ both
invariant subspaces are present. If $\ell<0$ they have trivial intersection.
For $\ell\ge0$ the intersection has dimension $2\ell+1$\,, giving all
finite dimensional representations of $\SL(2,\R)$. There are no
further closed invariant subspaces in $\Cal H_\chi$\,.
\\[2mm plus1mm]
$T_\chi$ depends only on the coset  of \,$\epsilon\!\!\mod1$\,. If
$\epsilon\notin\pm\ell+\Z$ then by \cite{VK}\;6.4.4 the representations
$T_{(\ell,\epsilon)}$ and $T_{(-\ell-1,\epsilon)}$ are infinitesimally
equivalent ($\ell\in\C$), but (using e.g. \cite{O}\;Ch.4,\,(5.02)\,)
they are not boundedly equivalent, unless $\re(\ell)=-\frac12$\,. For
$\re(\ell)=-\frac12\,,\;\epsilon\in\R$ they are unitarily equivalent;
for $\epsilon\notin\R$ they are just boundedly equivalent. There are
no further equivalences among the $T_\chi$ \,(in particular, $T_\ell^+$ and
$T_{-\ell-1}^+$ are not infinitesimally equivalent for $-1<\ell<0,\,
\ell\neq-\frac12$,
the same for $T_\ell^-$ and $T_{-\ell-1}^-$). As mentioned before,
$T_{(\ell,\epsilon)}^c$ and $T_{(-\ell-1,\epsilon)}^c$ are unitarily
equivalent. More generally, for the optimal weights $(a_o,b_o)$ (depending
on $\chi$), $T_{(\ell,\epsilon)}^{a_o,b_o}$ and
$T_{(-\ell-1,\epsilon)}^{a_o,b_o}$ are unitarily equivalent. But
(for $\epsilon\notin\{0,\frac12,\pm\ell\}+\Z$\,), $T_{(\ell,\epsilon)}^{1,1}$
and $T_{(-\ell-1,\epsilon)}^{1,1}$ are just boundedly equivalent.
It is not hard to see that $T_\chi$ is uniformly bounded on $Z$
iff $\epsilon\in\R$ and $T_\chi$ is uniformly bounded on $H$ iff
$\re(\ell)=-\frac12$. Combined, $T_\chi$ is uniformly bounded
iff $\epsilon\in\R$ and $\re(\ell)=-\frac12$, and then $T_\chi$ is even
unitary (see also \cite{Sa}\,p.\,37).
\vskip2mm
\item[(d)] \ Similar approximations can be done for $G=\SU(2)$ and the
functions $P^\ell_{mn}$ of \cite{VK}\;6.3, describing there
the matrix coefficients.
\end{Rems}

\end{document}